\newcommand{\Rmnum}[1]{\text{\MakeUppercase{\romannumeral #1}}}
\DeclareMathOperator{\diag}{diag}
\DeclareMathOperator*{\argmax}{argmax}
\DeclareMathOperator*{\argmin}{argmin}
\DeclareMathOperator{\var}{Var}
\newtheorem{lemma}{Lemma}
\newtheorem{theorem}{Theorem}
\newtheorem{corollary}{Corollary}
\newtheorem{remark}{Remark}
\newtheorem{assumption}{Assumption}
\newtheorem{definition}{Definition}
\newcommand{\ttt}{\bm \theta}
\newcommand{\w}{\bm w}
\newcommand{\Real}{\mathbb{R}}
\newcommand{\cS}{\mathcal{S}}
\newcommand{\cR}{\mathcal{R}}
\newcommand{\cE}{\mathcal{E}}
\newcommand{\cH}{\mathcal{H}}
\newcommand{\cV}{\mathcal{V}}
\newcommand{\sgn}{\text{sgn}}
\newcommand{\Ber}{\text{Ber}}
\title{Statistical Analysis of Multi-Relational Network Recovery}
\author{Zhi Wang, Xueying Tang and Jingchen Liu}
\date{}
\begin{document}

\maketitle

\begin{abstract}

	In this paper, we develop asymptotic theories for a class of latent variable models for large-scale multi-relational networks.
	In particular, we establish consistency results and asymptotic error bounds for the (penalized) maximum likelihood estimators when the size of the network tends to infinity.
	The basic technique is to develop a non-asymptotic error bound for the maximum likelihood estimators through large deviations analysis of random fields.
    We also show that these estimators are nearly optimal in terms of minimax risk. 
\end{abstract}

\section{Introduction}\label{sec:intro}
	A multi-relational network (MRN) describes   multiple relations among a set of entities simultaneously. 
	Our work on MRNs is mainly motivated by its applications to knowledge bases that are repositories of information. 
	Examples of knowledge bases include WordNet \citep{miller1995wordnet}, Unified Medical Language System \citep{mccray2003upper}, and Google Knowledge Graph (\url{https://developers.google.com/knowledge-graph}).
They have been used as the information source in many natural language processing tasks such as word-sense disambiguation and machine translation \citep{gabrilovich2009wikipedia, scott1999feature, ferrucci2010building}. 
	A knowledge base often includes knowledge on a large number of real-world objects or concepts. 
	When a knowledge base is characterized by MRN, the objects and concepts corresponds to nodes, and  knowledge types are relations. 
	Figure \ref{fig:example} provides an excerpt from an MRN in which ``Earth'', ``Sun'' and ``solar system'' are three  nodes. The knowledge about the orbiting patterns of celestial objects forms a relation ``orbit'', and the knowledge on classification of the objects forms another relation ``belong to'' in the MRN.

\begin{figure}[htb]
	\centering
	\includegraphics[width=7cm]{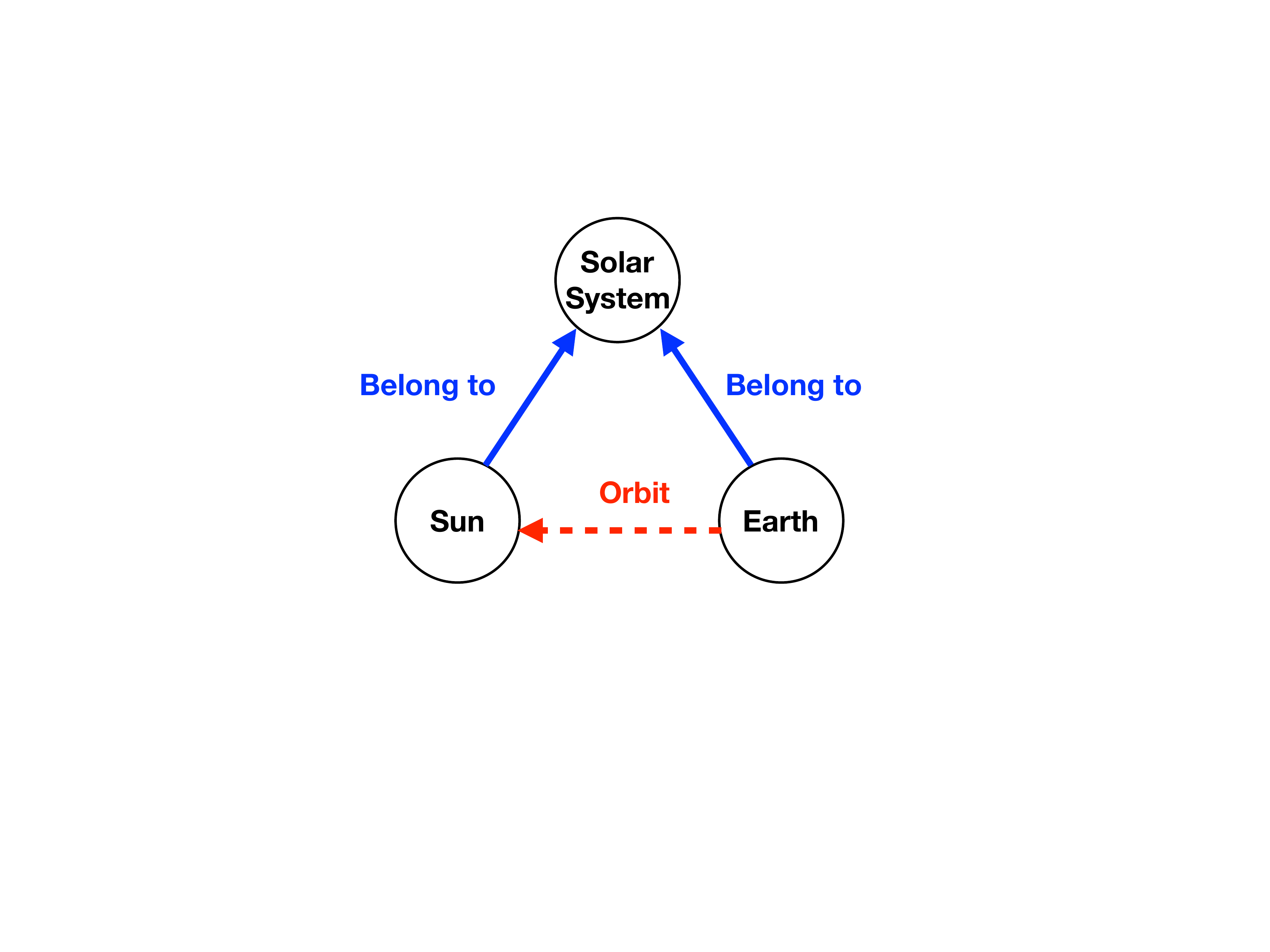}
	\caption{An example of the MRN representation of a knowledge base.}\label{fig:example}
\end{figure}

An important task of network analysis is to recover the unobserved network based on data. 
	In this paper, we consider a latent variable model for MRNs. 
	The presence of an edge from node $i$ to node $j$ of relation type $k$ is a Bernoulli random variable $Y_{ijk}$ with success probability $M_{ijk}$.
	Each node is associated with a vector, $\bm \theta$, called the embedding of the node. 
	The probability $M_{ijk}$ is modeled as a function $f$ of the embeddings, $\bm \theta_i$ and $\bm \theta_j$, and a relation-specific parameter vector $\bm w_k$.
	This is a natural generalization of the latent space model for single-relational networks \citep{hoff2002latent}.
Recently, it has been successfully applied to knowledge base analysis  \citep{bordes2013translation,wang2014knowledge,yang2015embedding,lin2015learning,garcia2016combining,trouillon2016complex,nickel2016holographic,liu2017analogical}. 
	Various forms of $f$ are proposed such as distance models \citep{bordes2013translation}, bilinear models \citep{trouillon2016complex,nickel2016holographic,liu2017analogical}, and neural networks \citep{socher2013reasoning}. 
	Computational algorithms are proposed to improve link prediction for knowledge bases \citep{kotnis2017analysis,kanojia2017enhancing}.
	The statistical properties of the embedding-based MRN models have not been rigorously studied. 
It remains unknown whether and to what extent the underlying distribution of MRN can be recovered, especially when there are a large number of nodes and relations.

	The results in this paper fill in the void by studying the error bounds and asymptotic behaviors of the estimators for  $M_{ijk}$'s for a general class of models.
	This is a challenging problem due to the following facts. 
	Traditional statistical inference of latent variable models often requires a (proper or improper) prior distribution for $\bm \theta_i$. 
	In such settings, one works with the marginalized likelihood with $\bm \theta_i$ integrated out.
	For the analysis of MRN, the sample size and the latent dimensions are often so large that the above-mentioned inference approaches are computationally infeasible. 
	For instance, a small-scale MRN could have a sample size as large as a few million, and the dimension of the embeddings is as large as several hundred.
	Therefore, in practice, the prior distribution is often dropped, and the latent variables $\bm \theta_i$'s are considered as additional parameters and estimated via maximizing the likelihood or penalized likelihood functions.
	The parameter space is thus substantially enlarged due to the addition of $\bm \theta_i$'s whose dimension is proportionate to the number of entities.
	As a result, in the asymptotic analysis, we face a double-asymptotic regime of both the sample size and the parameter dimension.

	In this paper, we develop results for the (penalized) maximum likelihood estimator of such models and show that under regularity conditions the estimator is consistent.
	In particular, we overcome the difficulty induced by the double-asymptotic regime via non-asymptotic bounds for the error probabilities. 
	Then, we show that the distribution of MRN can be consistently estimated in terms of average Kullback-Leibler (KL) divergence even when the latent dimension increases slowly as the sample size tends to infinity.
	A probability error bound is also provided together with the upper bound for the risk (expected KL divergence). 
	We further study the lower bound and show the near-optimality of the estimator in terms of minimax risk. 
	Besides the average KL divergence, similar results can be established for other criteria such as link prediction accuracy.

	The outline of the remaining sections is as follows. In Section \ref{sec:model}, we provide the model speicification and formulate the problem. Our main results are presented in Section \ref{sec:main}. 
	Finite sample performance is examined in Section \ref{sec:numerical_example} through simulated and real data examples. Concluding remarks are included in Section \ref{sec:discussion}.

\section{Problem setup}\label{sec:model}
\subsection{Notation}
Let $|\cdot|$ be the cardinality of a set and $\times$ be the Cartesian product. Set $\left\{1,\ldots, N\right\}$ is denoted by $[N]$. The sign function $\sgn(x)$ is defined to be $1$ for $x \geq 0$ and $0$ otherwise. The logistic function is denoted by $\sigma(x) = e^x / (1+e^x)$. Let $1_A$ be the indicator function on event $A$. We use $U\left[a,b\right]$ to denote the uniform distribution on $[a,b]$ and $\Ber(p)$ to denote the Bernoulli distribution with probability $p$. The KL divergence between $\Ber(p)$ and $\Ber(q)$ is written as $D(p||q) = p\log\frac{p}{q} + (1-p) \log \frac{1-p}{1-q}$. We use $\|\cdot\|$ to denote the Euclidean norm for vectors and the Frobenius norm for matrices.

For two real positive  sequences $\left\{a_n\right\}$ and $\left\{b_n\right\}$, we write $a_n = O(b_n)$ if $\limsup_{n \rightarrow \infty} a_n/b_n < \infty$. Similarly, we write $a_n = \Omega(b_n)$ if $\limsup_{n \rightarrow \infty}  b_n / a_n < \infty$ and   $a_n = o(b_n)$ if $\lim_{n \rightarrow \infty}  a_n / b_n = 0$. We denote $a_n \lesssim b_n$ if $\limsup_{n \rightarrow \infty}  a_n/b_n \leq 1$. When  $\left\{a_n\right\}$ and $\left\{b_n\right\}$ are negative sequences, $a_n \lesssim b_n$ means $\liminf_{n \rightarrow \infty}  a_n/b_n \geq 1$. In some places, we use $b_n \gtrsim a_n$ as an interchangeable notation of $a_n \lesssim b_n$. Finally,  if $\lim_{n \rightarrow \infty}  a_n / b_n = 1$, we write $a_n \sim b_n$. 

\subsection{Model}
	Consider an MRN with $N$ entities and $K$ relations. Given $i, j \in [N]$ and $k \in [K]$, the triple $\lambda = (i,j,k)$ corresponds to the edge from entity $i$ to entity $j$ of relation $k$. Let $\Lambda = [N] \times [N] \times [K]$ denote the set of all edges.
	We assume in this paper that an edge can be either present or absent in a network and use $Y_\lambda \in \{0, 1\}$ to indicate the presence of edge $\lambda$. In some scenarios, the status of an edge may have more than two types. Our analysis can be generalized to accommodate these cases.

We associate each entity $i$ with a vector $\ttt_i$ of dimension $d_E$ and each relation $k$ with a vector $\w_k$ of dimension $d_R$. Let $\cE \subseteq \Real^{d_E}$ be a compact domain where the embeddings $\ttt_1, \ldots, \ttt_N$ live. We call $\cE$ the entity space. Similarly, we define a compact relation space $\cR \subseteq \Real^{d_R}$ for the relation-specific parameters $\w_1, \ldots, \w_K$. Let $\bm x = \left(\ttt_1, \ldots, \ttt_N, \w_1, \ldots, \w_K\right)$ be a vector in the product space $\Theta = \cE^N \times \cR^K$. The parameters associated with edge $\lambda = (i, j, k)$ is then $\bm x_\lambda = (\ttt_i, \ttt_j, \w_k)$. We assume that given $\bm x$, elements in $\left\{Y_\lambda \mid \lambda \in \Lambda\right\}$ are independent with each other and that the log odds of $Y_\lambda = 1$ is 
\begin{equation}\label{eq:model}
\log \frac{P\left(Y_\lambda = 1 | \bm x \right)}{P\left(Y_\lambda = 0| \bm x\right)} = \phi\left(\bm x_\lambda\right), ~\text{for}~\lambda \in \Lambda.
\end{equation}
Here $\phi$ is defined on $\cE^2 \times \cR$, and $\phi\left(\bm x_\lambda\right)$ is often called the score of edge $\lambda$. 

We will use $Y$ to represent the $N \times N \times K$ tensor formed by $\left\{Y_\lambda \mid \lambda \in \Lambda\right\}$ and $M(\bm x)$ to represent the corresponding probability tensor $\left\{P(Y_\lambda = 1 \mid \bm x) \mid \lambda \in \Lambda\right\}$. Our model is given by
\begin{align}
Y_\lambda &\sim \Ber\left(M_\lambda\left(\bm x^*\right)\right),  \label{eq:model:Y} \\ 
M_\lambda(\bm x)  &= \sigma\left(\phi\left(\bm x_\lambda\right)\right), \lambda \in \Lambda, \label{eq:model:M}
\end{align}
where $\bm x^*$ stands for the true value of $\bm x$ and $Y_\lambda$'s are independent. In the above model, the probability of the presence of an edge is entirely determined by the embeddings of the corresponding entities and the relation-specific parameters. This imposes a low-dimensional latent structure on the probability tensor $M^* = M(\bm x^*)$. 

We specify our model using a generic function $\phi$. It includes various existing models as special cases. 
Below are two examples of $\phi$.
\begin{enumerate}
	\item Distance model \citep{bordes2013translation}. 
	\begin{equation}\label{eq:phi_1}
	\phi\left(\ttt_i, \ttt_j, \w_k\right) = b_k - \|\ttt_i + \bm a_k - \ttt_j\|^2,
	\end{equation}
	where $\ttt_i, \ttt_j, \bm a_k \in \mathbb{R}^{d}$, $b_k \in \mathbb{R}$ and $\bm w_k = (\bm a_k, b_k)$. In the distance model, relation $k$ from node $i$ to node $j$ is more likely to exist if $\ttt_i$ shifted by $\bm a_k$ is closer to $\bm \theta_j$ under the Euclidean norm.
		
	\item Bilinear model \citep{yang2015embedding}. 
	\begin{equation}\label{eq:phi_2}
	\phi\left(\ttt_i, \ttt_j, \w_k\right) = \ttt_i^T \diag(\w_k) \ttt_j,
	\end{equation}
	where $\ttt_i, \ttt_j, \w_k \in \mathbb{R}^d$ and $\diag(\w_k)$ is a diagonal matrix with $\w_k$ as the diagonal elements.
	Model \eqref{eq:phi_2} is a special case of the more general model $\phi\left(\ttt_i, \ttt_j, \w_k\right) = \ttt_i^T W_k \ttt_j$, where $W_k \in \mathbb{R}^{d \times d}$ is a matrix parametrized by $\w_k \in \mathbb{R}^{d_R}$. \citet{trouillon2016complex}, \citet{nickel2016holographic} and \citet{liu2017analogical} explored different ways of constructing $W_k$.
\end{enumerate}

Very often, only a small portion of the network is observed \citep{min2013distant}. 
We assume that each edge in the MRN is observed independently with probability $\gamma$ and that the observation of an edge is independent of $Y$. Let $\cS \subset \Lambda$ be the set of observed edges. Then the elements in $\cS$ are independent draws from $\Lambda$. For convenience, we use $n$ to represent the expected number of observed edges, namely, $n = E \left[|\cS| \right]= \gamma |\Lambda| = \gamma N^2 K$. Our goal is to recover the underlying probability tensor $M^*$ based on the observed edges $\{Y_\lambda \mid \lambda \in \cS\}$.

\begin{remark}
	Ideally, if there exists $\bm x^*$ such that $Y_\lambda = \sgn\left(M_\lambda(\bm x^*) - \frac{1}{2}\right)$ for all $\lambda \in \Lambda$, then $Y$ can be recovered with no error under $\bm x^*$. This is, however,  a rare case in practice, especially for large-scale MRN. A relaxed assumption is that $Y$ can be recovered with some low dimensional $\bm x^*$ and noise $\left\{\epsilon_\lambda\right\}$ such that 
	\begin{equation}\label{eq:model_noise}
	Y_\lambda = \sgn\left(M_\lambda(\bm x^*) + \epsilon_\lambda - \frac{1}{2}\right), \quad \epsilon_\lambda \overset{i.i.d}{\sim} U\left[-\frac{1}{2}, \frac{1}{2}\right], \quad \forall \lambda \in \Lambda.
	\end{equation}
	By introducing the noise term, we formulate the deterministic MRN as a random graph. 
	The model described in \eqref{eq:model:Y} is an equivalent but simpler form of \eqref{eq:model_noise}.
\end{remark}

\subsection{Estimation}\label{sec:estimation}

According to \eqref{eq:model:Y}, the log-likelihood function of our model is 
\begin{equation}\label{eq:llh}
l\left(\bm x; Y_{\cS}\right) = \sum_{\lambda \in \cS} Y_{\lambda} \log M_\lambda(\bm x) + \left(1- Y_\lambda\right)  \log \left(1-  M_\lambda(\bm x)\right).
\end{equation}
We omit the terms $\sum_{\lambda \in \mathcal{S}} \log \gamma + \sum_{\lambda \notin \mathcal{S}} \log\left(1-\gamma\right)$ in \eqref{eq:llh} since $\gamma$ is not the parameter of interest.
To obtain an estimator of $M^*$, we take the following steps. 
\begin{enumerate}
  	\item ~Obtain the maximum likelihood estimator (MLE) of $\bm x^*$,
  	\begin{equation}\label{eq:mle}
    \hat{\bm x} = \argmax_{\bm x\in \Theta} l \left(\bm x; Y_{\mathcal{S}} \right).
  	\end{equation}
  	\item ~Use the plug-in estimator  
  	\begin{equation}\label{Mhat}
    \hat{M} = M(\hat{\bm x})
  	\end{equation}
	as an estimator of $M^*$.
\end{enumerate}
In \eqref{eq:mle}, the estimator $\hat{\bm x}$ is a maximizer over the compact parameter space $\Theta = \cE^N \times \cR^K$. The dimension of $\Theta$ is $$m = Nd_E + Kd_R,$$ which grows linearly in the number of entities $N$ and the number of relations $K$.

\subsection{Evaluation criteria}

 We consider the following criteria to measure the error of the above-mentioned estimator. They will be used in both the main results and numerical studies.
 \begin{enumerate}[(a)]
 	\item ~ Average KL divergence of the predictive distribution from the true distribution
 	\begin{equation}\label{eq:loss_function}
 	L(\hat M, M^*) =  \frac{1}{|\Lambda|} \sum_{\lambda \in \Lambda} D(M_\lambda^* || \hat M_\lambda).
 	\end{equation}
 	\item ~ Mean squared error of the predicted scores
 	\begin{equation}\label{eq:mse_loss}
 	MSE_\phi = \frac{1}{|\Lambda|} \sum_{\lambda \in \Lambda} \left(\phi(\hat{\bm x}_\lambda) - \phi(\bm x_\lambda^*) \right)^2.
 	\end{equation}
 	\item ~ Link prediction error
 	\begin{equation}\label{eq:estimation_error_loss}
 	\widehat{err} = \frac{1}{|\Lambda|} \sum_{\lambda \in \Lambda} 1_{\hat Y_\lambda \neq  Y_\lambda^*},
 	\end{equation}
 	where  $\hat Y_\lambda = \sgn\left(\hat M_\lambda - \frac{1}{2}\right)$ and $Y_\lambda^* = \sgn\left(M_\lambda^* - \frac{1}{2}\right)$.
 \end{enumerate}

 \begin{remark}
  	The latent attributes of entities and relations are often not identifiable, so the MLE $\hat{\bm x}$ is not unique. For instance, in \eqref{eq:phi_1}, the values of $\phi$ and $M(\bm x)$ remain the same if we replace $\bm \theta_i$ and $\bm a_k$ respectively by $\Gamma \bm \theta_i + \bm t$ and $\Gamma \bm a_k$, where $\bm t$ is an arbitrary vector in $\mathbb R^{d_E}$ and $\Gamma$ is an orthonormal matrix.
	Therefore, we consider the mean squared error of scores, which are identifiable.
 \end{remark}



\section{Main Results}\label{sec:main}
We first provide results of the MLE in terms of KL divergence between the estimated and the true model. 
Specifically, we investigate the tail probability $P(L(\hat M, M^*) > t)$ and the expected loss $E[L(\hat M, M^*)]$. In Section \ref{sec:upper}, we discuss upper bounds for the two quantities. The lower bounds are provided in Section \ref{sec:lower}. In Section \ref{sec:extensions}, we extend the results to penalized maximum likelihood estimators (pMLE) and other loss functions. All proofs are deferred to the Appendix.

\subsection{Upper bounds}\label{sec:upper}
	We first present an upper bound for the tail probability $P(L(\hat M, M^*) > t)$ in Lemma \ref{lemma:upper:tail}. The result depends on the tensor size, the number of observed edges, the functional form of $\phi$, and the geometry of parameter space $\Theta$.
	The lemma explicitly quantifying the impact of these element on the error probability. 
	It is key to the subsequent analyses. 
	Lemma \ref{lemma:upper:risk} gives a non-asymptotic upper bound for the expected loss (risk). We then establish the consistency of $\hat M$ and the asymptotic error bounds in Theorem $\ref{thm:upper}$.

We will make the following assumptions throughout this section. 
\begin{assumption}\label{assumption:star}
	 $\bm x^* \in \Theta = \cE^N \times \cR^K$, where $\cE$ and $\cR$ are Euclidean balls of radius $U$.
\end{assumption}
\begin{assumption}\label{assumption:lip}
    The function $\phi$ is Lipschitz continuous under the Euclidean norm, 
	\begin{equation}
	\left| \phi\left(\bm u\right) - \phi\left(\bm v\right) \right| \leq \alpha \| \bm u - \bm v \|, \quad \forall \bm u, \bm v \in \mathcal{E}^2 \times \mathcal{R}, 
	\end{equation}
	where $\alpha$ is a Lipschitz constant.
\end{assumption}

	Assumption \ref{assumption:star} is imposed for technical convenience. The results can be easily extended to general compact parameter spaces. 
	Let $C = \sup\limits_{\bm u \in \mathcal{E}^2 \times \mathcal{R}} | \phi(\bm u) |$. Without loss of generality, we assume that $C \geq 2$. 
	
\begin{lemma}\label{lemma:upper:tail}
	 Consider  $\hat M$ defined in \eqref{Mhat} and the average KL divergence $L$ in \eqref{eq:loss_function}. Under Assumptions \ref{assumption:star} and \ref{assumption:lip}, for every $t>0$, $\beta>0$ and  $0 < s < nt$,
	\begin{equation}\label{eq:lemma:upper:tail}
	P\left(L(\hat M, M^*) \geq  t\right) \leq   \exp\left\{ -\frac{nt-s}{C} h\left(\frac{1}{2} - \frac{s}{2nt}\right) \right\} \left(1+\frac{2\sqrt{3}\alpha Un(1+\beta)}{s}\right)^m + \exp\left\{-n\beta h(\beta)\right\},
	\end{equation}
	where $m = Nd_E + Kd_R$ is the dimension of $\Theta$, $n = \gamma N^2K$ is the expected number of observations, and $h(u) = (1+\frac{1}{u}) \log(1+u)-1$.
\end{lemma}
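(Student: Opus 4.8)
The plan is to turn the statement into a \emph{uniform lower-tail estimate} for the empirical log-likelihood ratio and then control that estimate by a covering argument over $\Theta$ combined with a Bennett-type large-deviation bound. Write $R_\lambda = 1_{\lambda\in\cS}$, which are i.i.d.\ $\Ber(\gamma)$ and independent of $Y$, and set
\[
W_\lambda(\bm x) = Y_\lambda\log\frac{M^*_\lambda}{M_\lambda(\bm x)} + (1-Y_\lambda)\log\frac{1-M^*_\lambda}{1-M_\lambda(\bm x)},
\]
so that $l(\bm x^*;Y_\cS)-l(\bm x;Y_\cS)=\sum_{\lambda\in\Lambda}R_\lambda W_\lambda(\bm x)$ and $E[R_\lambda W_\lambda(\bm x)]=\gamma D(M^*_\lambda||M_\lambda(\bm x))$. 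Summing, the mean of $\sum_\lambda R_\lambda W_\lambda(\bm x)$ equals $n\,L(M(\bm x),M^*)$. Since $\hat{\bm x}$ maximizes the likelihood, $\sum_\lambda R_\lambda W_\lambda(\hat{\bm x})\le 0$, and therefore
\[
\{L(\hat M,M^*)\ge t\}\subseteq\Big\{\exists\,\bm x\in\Theta:\ L(M(\bm x),M^*)\ge t,\ \textstyle\sum_{\lambda}R_\lambda W_\lambda(\bm x)\le 0\Big\}.
\]
The whole problem reduces to showing this centered empirical sum cannot fall to $0$ while its mean exceeds $nt$.

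Next I would record the two structural facts that drive the bound. Because $|\phi|\le C$, every $M_\lambda\in[\sigma(-C),\sigma(C)]$, so $|W_\lambda(\bm x)|\le C$, and a direct computation gives the second-moment bound $E[W_\lambda(\bm x)^2]\le C\,D(M^*_\lambda||M_\lambda(\bm x))$ — it is precisely this constant (rather than a smaller one) that makes the normalization $C\ge 2$ convenient. Moreover both the per-edge increment and $D(p||\,\cdot\,)$ are $1$-Lipschitz in the score $\phi(\bm x_\lambda)$, so $W_\lambda$ and $L$ are $1$-Lipschitz in the scores. Since $\bm x_\lambda=(\ttt_i,\ttt_j,\w_k)$ is a concatenation of three blocks, an Euclidean net of $\Theta$ with per-block resolution $\eta/(\sqrt3\alpha)$ controls $|\phi(\bm x_\lambda)-\phi(\bm x'_\lambda)|\le\eta$ for \emph{all} $\lambda$ simultaneously, and such a net has cardinality at most $(1+2\sqrt3\alpha U/\eta)^m$.

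I would then handle the two layers of randomness separately. The sampling layer is dealt with by a Chernoff bound for $|\cS|\sim\mathrm{Binomial}(|\Lambda|,\gamma)$: dominating the binomial MGF by the Poisson MGF $\exp(n(e^\theta-1))$ and optimizing gives $P(|\cS|>n(1+\beta))\le\exp(-n\beta h(\beta))$, which is exactly the last term. On the complementary event $\{|\cS|\le n(1+\beta)\}$, choosing $\eta=s/(n(1+\beta))$ makes the discretization error of the sum at most $|\cS|\eta\le s$, so every offending $\bm x$ produces a net point $\bm x'$ with $\sum_\lambda R_\lambda W_\lambda(\bm x')\le s$ and $L(M(\bm x'),M^*)\ge t-\eta$; the net size becomes exactly $(1+2\sqrt3\alpha U n(1+\beta)/s)^m$, matching the middle factor. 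A union bound over the net reduces the problem to a single-point estimate.

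The remaining step, which is also the main obstacle, is the single-point large deviation. For a fixed net point, Chernoff with tilt $\tau=\theta/C>0$ gives
\[
P\Big(\textstyle\sum_\lambda R_\lambda W_\lambda\le s\Big)\le e^{\tau s}\prod_{\lambda}\big(1-\gamma\,(1-\rho_\tau(M^*_\lambda,M_\lambda))\big),\quad \rho_\tau(p,q)=p^{1-\tau}q^\tau+(1-p)^{1-\tau}(1-q)^\tau.
\]
Feeding $|W_\lambda|\le C$ and $E[W_\lambda^2]\le C D_\lambda$ into a Bennett-type expansion of $\rho_\tau$ yields a per-edge bound $1-\rho_\tau\ge (D_\lambda/C)\,\psi(\theta)$ for an explicit $\psi$ with $\psi(\theta)>0$ on a range of small $\theta>0$, hence $\prod_\lambda(\cdots)\le\exp(-(n/C)\psi(\theta)\,L)$. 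Substituting $L\ge t$ and optimizing the tilt $\theta$ collapses the exponent into the Poisson rate function $h$, producing $\exp\{-\frac{nt-s}{C}h(\tfrac12-\tfrac{s}{2nt})\}$; adding the discarded-event probability $\exp(-n\beta h(\beta))$ completes the bound. The delicate parts here are making the per-edge moment-generating estimate sharp enough that the \emph{aggregate} KL divergence $nt$, rather than any per-edge quantity, governs the rate, and carrying out the optimization so that it reduces exactly to $h(\tfrac12-\tfrac{s}{2nt})$; tracking the $O(\eta)$ slack between the continuum and the net is the secondary technical nuisance, absorbed through the choice $\eta=s/(n(1+\beta))$.
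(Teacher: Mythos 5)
Your skeleton matches the paper's proof almost step for step: the reduction via MLE optimality to a uniform lower-tail event over $\{\bm x: L(M(\bm x),M^*)\ge t\}$, the net of $\Theta$ at per-block resolution $s/(\sqrt{3}\alpha n(1+\beta))$ yielding exactly the factor $\bigl(1+2\sqrt{3}\alpha U n(1+\beta)/s\bigr)^m$, the Chernoff/Bennett control of $|\cS|$ yielding $\exp\{-n\beta h(\beta)\}$, and a union bound reducing everything to a single-point large-deviation estimate. Up to that point your argument is correct; in fact your bookkeeping that the approximating net point still has loss at least $t-\eta$ is more careful than the paper's, which applies the mean bound to all net points.

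The gap is in the single-point estimate, which you yourself call the main obstacle but then only assert. Two concrete problems. First, the moment inequality you feed into the tilting argument, $E[W_\lambda^2]\le C\,D(M_\lambda^*||M_\lambda(\bm x))$, is false: take $M_\lambda^*=\sigma(C)$ and $M_\lambda(\bm x)=\sigma(-C)$; then $W_\lambda=\pm C$, so $E[W_\lambda^2]=C^2$, while $D(M_\lambda^*||M_\lambda(\bm x))=C(2\sigma(C)-1)<C$, hence $E[W_\lambda^2]>C\,D$ for every finite $C$. The correct statement is the content of the paper's Lemma~\ref{variance_bound_lemma}: the per-edge \emph{variance} is at most $2\max\{C,2\}D$, and proving it requires a genuinely delicate monotonicity argument (this is exactly why the paper normalizes $C\ge 2$); passing to the second moment costs an additional $D^2\le CD$ on top. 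Second, this constant is not cosmetic: in any Bennett/Chernoff computation the argument of $h$ in the final exponent is the ratio of $C$ times the deviation to the variance proxy, so the factor $2C$ is precisely what produces $h\bigl(\frac{1}{2}-\frac{s}{2nt}\bigr)$, whereas a second-moment constant of order $3C$ gives only $h\bigl(\frac{1}{3}-\frac{s}{3nt}\bigr)$, which is strictly weaker and does not imply \eqref{eq:lemma:upper:tail}. Your concluding step --- that optimizing the tilt ``collapses the exponent into the Poisson rate function $h$, producing $\exp\{-\frac{nt-s}{C}h(\frac{1}{2}-\frac{s}{2nt})\}$'' --- is exactly where these constants must be earned, and it is asserted rather than derived; the paper obtains that display in one line by citing Bennett's inequality for the centered sum $f(\bm x)$ with the variance proxy $\var[f(\bm x)]\le -2CE[f(\bm x)]$. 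To repair your argument, replace the false second-moment claim by a bound with constant $2\max\{C,2\}$ per unit of KL (proved along the lines of Lemma~\ref{variance_bound_lemma}, and accounting for the extra $D^2$ term coming from the sampling indicator $1_{\lambda\in\cS}$), then apply Bennett to the centered sum and verify that the resulting argument of $h$ is at least $\frac{1}{2}-\frac{s}{2nt}$.
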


	In the proof of Lemma \ref{lemma:upper:tail}, we use Bennett's inequality to develop a uniform bound that does not depend on the true parameters. It is sufficient for the current analysis. If the readers need sharper bounds, they can read through the proof and replace the Bennett's bound by the usual large deviation rate function which provides a sharp exponential bound that depends on the true parameters. We don't pursue this direction in this paper.


Lemma \ref{lemma:upper:risk} below gives an upper bound of risk $E[L(\hat M, M^*)]$, which follows from Lemma \ref{lemma:upper:tail}.
\begin{lemma}\label{lemma:upper:risk}
	Consider $\hat M$ defined in \eqref{Mhat} and loss function $L$ in \eqref{eq:loss_function}. Let $C_1 = 18C$, $C_2 = 8\sqrt{3}\alpha U$ and $C_3 = 2\max\left\{C_1, C_2\right\}$. If  Assumptions \ref{assumption:star} and \ref{assumption:lip} hold and $\frac{n}{m} \geq C_2 + e $, then
	\begin{equation}\label{eq:lemma:upper:risk}
	E[L(\hat M, M^*)] \leq C_3 \frac{m}{n} \log \frac{n}{m} + \frac{C_1}{n}\exp\left\{- m \log \frac{n}{m}\right\} + \frac{3}{n}\exp\left\{-\frac{1}{3}\left( n+ C_3 m \log \frac{n}{m}\right)\right\}. 
	\end{equation}
\end{lemma}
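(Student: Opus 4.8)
The plan is to turn the tail bound of Lemma~\ref{lemma:upper:tail} into the risk bound through the layer-cake identity $E[L(\hat M,M^*)]=\int_0^\infty P(L(\hat M,M^*)\ge t)\,dt$, splitting the integral at a threshold $t_0$ that will itself become the leading term $C_3\frac{m}{n}\log\frac{n}{m}$. Before integrating, I would first record that the loss is bounded: since $|\phi|\le C$ forces every $M_\lambda^*,\hat M_\lambda\in[\sigma(-C),\sigma(C)]$, each log-ratio appearing in $D(M_\lambda^*\|\hat M_\lambda)$ is at most $\log\frac{\sigma(C)}{\sigma(-C)}=C$, whence $D(M_\lambda^*\|\hat M_\lambda)\le C$ and $L(\hat M,M^*)\le C$ almost surely. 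This boundedness is essential: the second summand of \eqref{eq:lemma:upper:tail}, $\exp\{-n\beta h(\beta)\}$, does not decay in $t$, so truncating the integral at $C$ (using $P(L\ge t)=0$ for $t>C$) is what keeps its contribution finite.

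For $t<t_0$ I would use the trivial bound $P(L\ge t)\le 1$, contributing exactly $t_0=C_3\frac{m}{n}\log\frac{n}{m}$. For $t\ge t_0$ I would invoke Lemma~\ref{lemma:upper:tail} with $s=nt/2$, which is admissible ($0<s<nt$) and freezes the rate argument at $\tfrac12-\tfrac{s}{2nt}=\tfrac14$, so that $\frac{nt-s}{C}=\frac{nt}{2C}$ and the first summand equals $\exp\{-\frac{nt}{2C}h(\tfrac14)\}\bigl(1+\frac{4\sqrt3\alpha U(1+\beta)}{t}\bigr)^m$. Writing the polynomial factor as $\exp\{m\log(1+\frac{4\sqrt3\alpha U(1+\beta)}{t})\}$, the summand is an exponentially decaying factor in $t$ times a factor whose exponent is of order $m\log\frac{n}{m}$ near $t_0$. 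The choice $t_0=C_3\frac{m}{n}\log\frac{n}{m}$ is made precisely so the decay rate $\frac{nt_0}{2C}h(\tfrac14)=\frac{C_3h(1/4)}{2C}\,m\log\frac{n}{m}$ dominates the polynomial exponent; since $C_3\ge 2C_1=36C$ and $h(\tfrac14)>\tfrac19$, we get $\frac{C_3h(1/4)}{2C}\ge 2$, so the net exponent at $t=t_0$ is at least $m\log\frac{n}{m}$, i.e.\ the value of the first summand at $t_0$ is at most $\exp\{-m\log\frac{n}{m}\}$.

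It then remains to integrate the two summands over $[t_0,C]$. Because the polynomial factor $(1+\frac{4\sqrt3\alpha U(1+\beta)}{t})^m$ is decreasing in $t$, for the first summand I can bound the integrand by $e^{-at}$ times its value at $t_0$, where $a=\frac{n}{2C}h(\tfrac14)$, so $\int_{t_0}^\infty e^{-at}\cdots\,dt\le a^{-1}\exp\{-m\log\frac{n}{m}\}$; using $h(\tfrac14)\ge\tfrac19$ and $C_1=18C$ gives $a^{-1}\le \frac{C_1}{n}$, producing the $\frac{C_1}{n}\exp\{-m\log\frac{n}{m}\}$ term. For the second summand I would pick $\beta$ (depending on $n,m$ but not $t$) so that $\beta h(\beta)\ge \tfrac13+\tfrac{C_3 m}{3n}\log\frac{n}{m}$, whereupon $\int_{t_0}^{C}\exp\{-n\beta h(\beta)\}\,dt\le C\exp\{-n\beta h(\beta)\}$ collapses to the stated $\frac3n\exp\{-\frac13(n+C_3 m\log\frac{n}{m})\}$ remainder. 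The main obstacle is the uniform control in the second paragraph: one must verify over all $t\ge t_0$ that the exponential factor beats the $m$-th power, equivalently that $\frac{nt}{2C}h(\tfrac14)-m\log(1+\frac{4\sqrt3\alpha U(1+\beta)}{t})\ge m\log\frac{n}{m}$, and that $\log(1+\frac{4\sqrt3\alpha U(1+\beta)}{t_0})\le\log\frac{n}{m}$. This is exactly where the explicit constants $C_1,C_2,C_3$, the monotonicity of $h$, and the hypothesis $\frac{n}{m}\ge C_2+e$ are consumed.
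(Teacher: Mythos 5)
Your strategy coincides with the paper's own proof in all the load-bearing steps: the layer-cake split at $t_0=C_3\frac{m}{n}\log\frac{n}{m}$, the trivial bound $P\le 1$ below $t_0$, the substitution $s=nt/2$ in Lemma \ref{lemma:upper:tail} (freezing the rate argument at $h(1/4)$), the absorption of the covering factor into $\exp\{m\log\frac{n}{m}\}$ via $\frac{n}{m}\ge C_2+e$ and $C_3\ge 2C_2$, and the integration of the first summand using $h(1/4)\ge 1/9$, $C_1=18C$, $C_3\ge 2C_1$, which yields the terms $C_3\frac{m}{n}\log\frac{n}{m}$ and $\frac{C_1}{n}\exp\{-m\log\frac{n}{m}\}$ exactly as in \eqref{eq:lemma:upper:risk}.

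The genuine gap is in your handling of the second summand $\exp\{-n\beta h(\beta)\}$. Because you freeze $\beta$ (independent of $t$), the best your truncation argument gives is $\int_{t_0}^{C}\exp\{-n\beta h(\beta)\}\,dt\le (C-t_0)\exp\{-\frac{1}{3}(n+C_3 m\log\frac{n}{m})\}$, and since $C\ge 2$ while the lemma asserts a prefactor $\frac{3}{n}$, this does not ``collapse to the stated remainder'': you are short by a factor of order $Cn/3$, so inequality \eqref{eq:lemma:upper:risk} as written is not established. The paper's proof instead takes $\beta=1+t$, growing with $t$: since $h$ is increasing and $h(1)=2\log 2-1>\frac{1}{3}$, the second summand becomes $\exp\{-\frac{1}{3}n(1+t)\}$, which decays in $t$ and integrates over $[t_0,\infty)$ to exactly $\frac{3}{n}\exp\{-\frac{1}{3}(n+C_3 m\log\frac{n}{m})\}$; this also renders your boundedness/truncation device (and the claim that $L\le C$ a.s.\ is ``essential'') unnecessary. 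The $t$-dependent $\beta$ costs nothing elsewhere, since with $\beta=1+t$ the covering factor is $1+\frac{C_2}{2}+\frac{C_2}{t}\le\frac{n}{m}$ for all $t\ge t_0$, which is precisely the verification you deferred to your last sentence. Your version can be repaired by inflating $\beta$ so that $n\beta h(\beta)\ge\frac{1}{3}(n+C_3 m\log\frac{n}{m})+\log\frac{Cn}{3}$ (still compatible with the covering bound), but as written the third term of the lemma is not obtained.
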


We are interested in the asymptotic behavior of the tail probability in two scenarios: (i) $t$ is a fixed constant and (ii) $t$ decays to zero as the number of entities $N$ tends to infinity.
	The following theorem gives an asymptotic upper bound for the tail probability and the risk.
\begin{theorem}\label{thm:upper}
 Consider $\hat M$ defined in \eqref{Mhat} and the loss function $L$ in \eqref{eq:loss_function}. Let the number of entities $N \rightarrow \infty$ and $C, K, U,  d_E, d_R, \alpha$, and $\gamma$ be fixed constants. If Assumptions \ref{assumption:star} and \ref{assumption:lip} hold, we have the following asymptotic inequalities.\\
When $t$ is a fixed constant,
\begin{equation}\label{eq:thm:upper:constant}
\log P(L(\hat M, M^*) \geq t) \lesssim - \frac{t}{5C}n.
\end{equation}
When $t = 10C \frac{m}{n} \log \frac{n}{m}$,
\begin{equation}\label{eq:thm:upper:decay}
\log P(L(\hat M, M^*) \geq t) \lesssim - m \log \frac{n}{m}.
\end{equation}
Furthermore,
\begin{equation}\label{eq:thm:upper:risk}
E[L(\hat M, M^*)] \lesssim  10C \frac{m}{n} \log \frac{n}{m}.
\end{equation}
\end{theorem}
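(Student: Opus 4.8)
The plan is to derive all three displays from the single tail bound in Lemma \ref{lemma:upper:tail} by specializing the free parameters $s$ and $\beta$ and passing to the limit $N\to\infty$. Throughout I will use the regime facts that, with $K,U,d_E,d_R,\alpha,\gamma$ held fixed, $m = Nd_E + Kd_R = \Theta(N)$ and $n = \gamma N^2 K = \Theta(N^2)$, so that $m/n\to 0$, $n/m\to\infty$, $\log(n/m)\sim\log N$, and $m\log(n/m) = o(n)$. The recurring numerical input is the value $h(1/2) = 3\log(3/2) - 1 \approx 0.216$, which is strictly larger than $1/5$; this single inequality is what makes the constant $1/(5C)$ in \eqref{eq:thm:upper:constant} admissible and the coefficient in \eqref{eq:thm:upper:decay} exceed $1$.

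For \eqref{eq:thm:upper:constant} and \eqref{eq:thm:upper:decay} I would set $s = \epsilon\,nt$ for a small fixed $\epsilon\in(0,1)$ and take $\beta$ to be a fixed constant. With this choice the first exponent in \eqref{eq:lemma:upper:tail} becomes $-\tfrac{nt(1-\epsilon)}{C}h\!\left(\tfrac12-\tfrac\epsilon2\right)$ and the covering factor contributes $m\log\!\left(1+\tfrac{2\sqrt3\,\alpha U(1+\beta)}{\epsilon t}\right)$ to the log-probability. For fixed $t$ this covering contribution is $O(m)=O(N)$, negligible against the $\Theta(n)=\Theta(N^2)$ main exponent; letting $\epsilon\downarrow 0$ and using $(1-\epsilon)h(\tfrac12-\tfrac\epsilon2)>1/5$ bounds the first term by $\exp\{-\tfrac{t}{5C}n(1-o(1))\}$, while $\beta$ is chosen large enough that $\beta h(\beta)\ge t/(5C)$ controls the second term, giving \eqref{eq:thm:upper:constant}. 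For the decaying choice $t=10C\tfrac mn\log\tfrac nm$ one has $nt=10C\,m\log\tfrac nm$, the factor $C$ cancels, and the main exponent is $\approx -10\,h(1/2)\,m\log\tfrac nm\approx -2.16\,m\log\tfrac nm$. Here the covering term is no longer negligible: since $\tfrac1{\epsilon t}=\Theta\!\big(\tfrac{n}{m\log(n/m)}\big)$, it grows to $\sim m\log\tfrac nm$. The net first-term coefficient is therefore $-(10\,h(1/2)-1)\approx -1.16<-1$, so after absorbing the much smaller ($e^{-\Theta(N^2)}$) second term one obtains \eqref{eq:thm:upper:decay}.

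For the risk bound \eqref{eq:thm:upper:risk} I would integrate the tail rather than invoke Lemma \ref{lemma:upper:risk}, whose leading constant $C_3=2\max\{18C,8\sqrt3\,\alpha U\}$ exceeds $10C$ and hence cannot deliver the stated coefficient. Writing $E[L]=\int_0^\infty P(L\ge t)\,dt$ and splitting at $t_0=10C\tfrac mn\log\tfrac nm$, the lower piece is bounded trivially by $t_0$. For the upper piece I reuse $s=\epsilon nt$ and fixed $\beta$, with $c_1=(1-\epsilon)h(\tfrac12-\tfrac\epsilon2)/C$ and $c_4=2\sqrt3\,\alpha U(1+\beta)/\epsilon$: the first-term exponent $-c_1 nt+m\log(1+c_4/t)$ is decreasing in $t$, so integrating the bound $e^{m\log(1+c_4/t_0)}e^{-c_1 nt}$ over $t\ge t_0$ leaves $O(1/n)$ times $e^{m\log(n/m)}e^{-c_1 n t_0}\approx O(1/n)\,e^{-1.16\,m\log(n/m)}$, which is $o(t_0)$. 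The one genuine snag is that the second term $\exp\{-n\beta h(\beta)\}$ is constant in $t$, so a naive integral over $(t_0,\infty)$ diverges; I remove this by noting that $L$ is bounded by a constant $L_{\max}$ depending only on $C$, because $|\phi|\le C$ confines every $M_\lambda,\hat M_\lambda$ to $[\sigma(-C),\sigma(C)]$, on which the Bernoulli KL divergence is bounded. The integral then runs only over $[0,L_{\max}]$ and the second term contributes at most $L_{\max}e^{-\Theta(N^2)}=o(t_0)$, giving $E[L]\le t_0(1+o(1))\lesssim 10C\tfrac mn\log\tfrac nm$.

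The main obstacle is the bookkeeping of constants in the regime where the exponential decay and the metric-entropy factor $(1+\cdots)^m$ are of the same order: in \eqref{eq:thm:upper:decay} and in the risk-integral tail the covering term consumes a full $+m\log\tfrac nm$, so one must verify $10\,h(1/2)>2$, i.e. $h(1/2)>1/5$, to keep the net coefficient below $-1$. Once $s=\epsilon nt$, a fixed $\beta$, and the boundedness of $L$ are in place, the rest is routine asymptotics under $m=\Theta(N)$, $n=\Theta(N^2)$.
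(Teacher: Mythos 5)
Your proposal is correct, and its skeleton matches the paper's: both deduce all three displays by specializing $s$ and $\beta$ in Lemma \ref{lemma:upper:tail}, and for the risk both integrate the tail bound beyond a threshold $t_0 \asymp \frac{m}{n}\log\frac{n}{m}$ rather than invoke Lemma \ref{lemma:upper:risk} (whose constant $C_3 \ge 36C$ is indeed too large to give the coefficient $10C$, exactly as you observe). Within that common frame you make two genuinely different choices. First, for \eqref{eq:thm:upper:decay} you evaluate directly at $t = 10C\frac{m}{n}\log\frac{n}{m}$ with $s=\epsilon nt$, obtaining net exponent coefficient $-\bigl(10(1-\epsilon)h(\tfrac{1-\epsilon}{2})-1\bigr)\approx -1.16 < -1$; the paper instead takes $s=m$, proves the bound at the sharper threshold $t=\frac{2C}{h(1/2)}\frac{m}{n}\log\frac{n}{m}$, where the net coefficient is exactly $-1$, and then passes to the stated $t$ via $h(1/2)\ge 1/5$ and monotonicity of the tail in $t$ — slightly stronger statement, one extra step. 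Second, and this is where the arguments truly diverge, the ``snag'' you identify in the risk integral (with fixed $\beta$ the term $\exp\{-n\beta h(\beta)\}$ does not decay in $t$, so its integral over $(t_0,\infty)$ diverges) is resolved by the paper by taking $\beta = 1+t$, which turns that term into $\exp\{-n(1+t)h(1+t)\}$ whose integral is $o(t_0)$; you instead keep $\beta$ fixed and truncate the integral at $L_{\max}$, using that $|\phi|\le C$ confines every entry of $M^*$ and $\hat M$ to $[\sigma(-C),\sigma(C)]$, on which $D(p\,||\,q)\le C$, so $L\le C$ deterministically. Your fix is more elementary and has a side benefit: with $s=\epsilon nt$ and fixed $\beta$ the covering factor $\left(1+c_4/t\right)^m$ is \emph{decreasing} in $t$, so bounding it by its value at $t_0$ over the whole range of integration is legitimate, whereas with the paper's $\beta=1+t$ the covering factor grows with $t$ and the paper's one-line bound of it by its value at $t_0$ is written loosely (the integral still converges, since the growth is only polynomial against exponential decay, but your route sidesteps the issue). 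All quantitative inputs — $h(1/2)=3\log(3/2)-1\approx 0.216>1/5$, $m=\Theta(N)$, $n=\Theta(N^2)$, $m\log\frac{n}{m}=o(n)$ — coincide with the paper's.
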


	The consistency of $\hat M$ is implied by \eqref{eq:thm:upper:constant} and the rate of convergence is ${| \log P(L(\hat M, M^*) \geq t)| }= \Omega(N^2)$ if $t$ is a fixed constant.
	The rate decreases to $\Omega(N \log N)$ for the choice of $t$ producing \eqref{eq:thm:upper:decay}. It is also implied by \eqref{eq:thm:upper:decay} that $L(\hat M, M^*) = O(\frac{1}{N} \log N)$ with high probability. 
	We show in the next section that this upper bound is reasonably sharp.	

	The condition that $K, U, d_E, d_R$, and $\alpha$ are fixed constants can be relaxed. For instance, we can let $U$, $d_E$, $d_R$, and $\alpha$ go to infinity slowly at the rate $O(\log N)$ and $K$  at the rate $O(N)$. We can let $\gamma$ go to zero provided that $\frac{m}{n} \log \frac{n}{m} = o(1)$.

\subsection{Lower bounds}\label{sec:lower}
We show in Theorem \ref{thm:lower} that the order of the minimax risk is $\Omega(\frac{m}{n})$, which implies the near optimality of  $\hat M$ in $\eqref{Mhat}$ and the upper bound $O(\frac{m}{n}\log \frac{n}{m})$ in Theorem \ref{thm:upper}. To begin with, we introduce the following definition and assumption. 
 
\begin{definition}
	For $\bm u  = (\ttt, \ttt', \w) \in \cE^2 \times \cR$, the $r$-neighborhood of $\bm u$ is 
	$$\mathcal{N}_r(\bm u) = \left\{(\bm \eta, \bm \eta', \bm \zeta) \in \cE^2 \times \cR \mid \|\bm \eta - \ttt \| \leq r, \|\bm \eta' - \ttt' \| \leq r, \|\bm \zeta - \w\| \leq r \right\}.$$
	Similarly, for $\bm x = (\ttt_1, \ldots, \ttt_N, \w_1, \ldots, \w_K) \in \cE^N \times \cR^K$, the $r$-neighborhood of $\bm x$ is 
	$$\mathcal{N}_r(\bm x) = \left\{(\bm \eta_1, \ldots, \bm \eta_N, \bm \zeta_1, \ldots, \bm \zeta_K) \in \cE^N \times \cR^K \mid \| \bm \eta_i - \ttt_i \| \leq r, \| \bm \zeta_k - \w_k \| \leq r, \forall i \in [N], k \in [K]\right\}.$$
\end{definition}

\begin{assumption}\label{assumption:non_constant}
	There exists $\bm u_0 \in \mathcal{E}^2 \times \mathcal{R}$ and $r, \kappa > 0$ such that $\mathcal{N}_r(\bm u_0) \subset \cE^2 \times \cR$ and 
	\begin{equation}
	\left| \sigma\left(\phi(\bm u)\right) - \sigma\left(\phi(\boldsymbol{v})\right) \right| \geq \kappa \| \bm u - \boldsymbol{v} \|, \quad \forall \bm u, \bm v \in \mathcal{N}_r(\bm u_0).
	\end{equation}
\end{assumption}

\begin{theorem} \label{thm:lower}
	Let $b=\sup_{\bm u \in \mathcal{N}_r(\bm u_0)} \sigma\left(\phi(\bm u)\right)$. Under Assumptions \ref{assumption:lip} and \ref{assumption:non_constant}, if $r^2 \geq \frac{(m/16-1) b(1-b)}{12\alpha^2n}$, then for any estimator $\hat{M}$, there exists $\bm x^* \in \Theta$ such that
	\begin{equation}\label{eq: minimax}
	P\left(L(\hat M, M^*) >  \tilde{C} \frac{m/16 - 1}{n}\right) \geq \frac{1}{2},
	\end{equation}
	where $\tilde{C}  = \frac{\kappa^2 b(1-b)}{108\alpha^2}$.
	Consequently, the minimax risk
	\begin{equation}
	\min\limits_{\hat M} \max\limits_{M^*} E[L(\hat M, M^*)]  \geq  \tilde{C} \frac{m/16 - 1}{2n}.
	\end{equation}
\end{theorem}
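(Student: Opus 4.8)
The plan is to establish the minimax lower bound by the standard route of reducing estimation to a multiple-hypothesis testing problem and invoking Fano's inequality. Since the loss $L$ is a Kullback--Leibler divergence rather than a metric, I would first record a two-sided comparison between $D$ and the squared gap of the success probabilities: Pinsker's inequality (equivalently, the second-order Taylor expansion of $D(p\|q)$) gives $D(M_\lambda\|M'_\lambda) \gtrsim (M_\lambda - M'_\lambda)^2$ from below, while on the region where the relevant scores stay bounded the same expansion gives $D(M_\lambda\|M'_\lambda) \le (M_\lambda - M'_\lambda)^2/(2b(1-b))$ from above, the factor $b(1-b)$ being exactly the quantity appearing in $\tilde C$. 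The lower comparison will convert separation of the probability tensors into separation in $L$; the upper comparison will convert it into control of the data-generating distributions' mutual information.

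Next I would build the hypothesis class inside the favorable neighborhood supplied by Assumption \ref{assumption:non_constant}. I would fix a base configuration $\bm x^{(0)} \in \Theta$ for which a fixed positive fraction of the edges $\lambda$ have their parameter triple $\bm x^{(0)}_\lambda$ equal to the center $\bm u_0$, and then perturb the corresponding embedding and relation coordinates by a common magnitude $\delta_0$ along the axis directions, encoding the perturbation by a binary string $\omega$. Because the perturbed triples remain inside $\mathcal{N}_r(\bm u_0)$ only if the total displacement of each vector stays at most $r$, the calibration of $\delta_0$ is governed precisely by the hypothesis $r^2 \ge (m/16-1)b(1-b)/(12\alpha^2 n)$. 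Applying the Varshamov--Gilbert bound to the binary strings yields a subfamily of cardinality at least $2^{m/16}$ whose members are pairwise Hamming-separated by at least $m/16$ coordinates; this is the origin of the factor $m/16$ in the statement. Combining the Hamming separation with Assumption \ref{assumption:non_constant} (lower Lipschitz, constant $\kappa$) and the lower comparison of the first step gives, for any two members, $L(M(\bm x^{(j)}), M(\bm x^{(k)})) \gtrsim \kappa^2 \delta_0^2 (m/16)/|\Lambda|$, which after substituting the calibrated $\delta_0$ becomes a separation of order $\tilde C(m/16-1)/n$.

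It remains to verify the information budget demanded by Fano. Using Assumption \ref{assumption:lip} (upper Lipschitz, constant $\alpha$), the upper comparison, and the fact that each edge is observed independently with probability $\gamma$, so that the full-data KL between two hypotheses equals $\gamma \sum_\lambda D(M^{(j)}_\lambda\|M^{(k)}_\lambda)$ with $n = \gamma|\Lambda|$, I would bound the pairwise KL of the sampling distributions by a multiple of $\gamma\,\alpha^2\delta_0^2\,(\text{number of affected edges})$ and check that, with $\delta_0$ as calibrated, this remains below $\tfrac12\log(\#\text{hypotheses}) = \tfrac12\,(m/16)\log 2$. Fano's inequality then forces the maximal testing error over the family to be at least $1/2$, which, through the separation above, produces an $\bm x^* \in \Theta$ with $P(L(\hat M, M^*) > \tilde C(m/16-1)/n) \ge 1/2$; a single application of Markov's inequality upgrades this to the claimed lower bound on the minimax risk.

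The main obstacle I anticipate is the coupling induced by parameter sharing: every entity embedding enters $O(NK)$ edges and plays both the source and the target role, so a single perturbed coordinate simultaneously moves many edge scores, and the set of edges lying inside $\mathcal{N}_r(\bm u_0)$ (which controls the separation from below) differs from the set of all affected edges (which controls the KL from above). Choosing the base configuration and the perturbation directions so that these two edge sets are compatible --- rich enough to realize the $m/16$ separation yet sparse enough to keep the mutual information within the Fano budget --- while simultaneously respecting the neighborhood-feasibility constraint that yields the exact condition on $r^2$, is where the delicate bookkeeping, and the precise constants $12$, $108$, and $m/16$, resides.
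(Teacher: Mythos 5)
Your proposal follows essentially the same route as the paper's proof: reduction to multiple-hypothesis testing via a Gilbert--Varshamov packing of axis-aligned hypercube perturbations around a base configuration whose edges sit at $\bm u_0$, Pinsker's inequality plus Assumption \ref{assumption:non_constant} for the separation, a quadratic upper bound on the KL divergence weighted by $b(1-b)$ plus Assumption \ref{assumption:lip} for the Fano information budget, calibration of the perturbation size against the $r^2$ condition, and Markov's inequality for the minimax risk. The paper realizes your ``fixed positive fraction of edges at $\bm u_0$'' by placing half the entities at $\ttt_0$, half at $\ttt_0'$, and all relations at $\w_0$, and its bookkeeping of the shared-parameter multiplicities (your flagged concern) is exactly what produces the constants $2/9$, $12$, and $108$; apart from such schematic constants your plan matches the paper's proof step for step.
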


\subsection{Extensions}\label{sec:extensions}
\subsubsection{Reguralization}
In this section, we extend our asymptotic results in Theorem \ref{thm:upper} to regularized estimators. In practice, regularization is often considered to prevent overfitting. 
We consider a regularization similar to elastic net \citep{zou2005regularization}
\begin{equation}\label{eq:pen_loglik}
l_\rho\left(\bm x; Y_\cS\right) = l(\bm x; Y_\cS) - \rho_1 \| \bm x \|_1 - \rho_2 \| \bm x \|^2,
\end{equation}
where $\| \cdot \|_1$ stands for $L_1$ norm and $\rho_1, \rho_2 \geq 0$ are regularization parameters. 
The pMLE is
\begin{equation}\label{eq:pen_mle}
\hat{\bm x} = \argmax_{\bm x \in \Theta}  l_\rho(\bm x; Y_\cS).
\end{equation}
Note that the MLE in \eqref{eq:mle} is a special case of the pMLE above with $\rho_1 = \rho_2 = 0$. Since $\hat{\bm x}$ is shrunk towards $\bm 0$, without loss of generality, we assume that $\cE$ and $\cR$ are centered at $\bm 0$.
We generalize Theorem \ref{thm:upper} to pMLE in the following theorem.
\begin{theorem}\label{thm:extension:pen}
	Consider the estimator $\hat M$ given by \eqref{eq:pen_mle} and \eqref{Mhat} and the loss function $L$ in \eqref{eq:loss_function}.  Let the number of entities $N \rightarrow \infty$ and $C, K, U,  d_E, d_R, \alpha, \gamma$ be absolute constants.  If Assumptions \ref{assumption:star} and \ref{assumption:lip} hold and $\rho_1 + \rho_2 = o(\log N)$, then asymptotic inequalities \eqref{eq:thm:upper:constant}, \eqref{eq:thm:upper:decay}, and \eqref{eq:thm:upper:risk} in Theorem \ref{thm:upper} hold.
\end{theorem}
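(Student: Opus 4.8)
The plan is to show that the elastic-net penalty perturbs the likelihood comparison underlying Lemma \ref{lemma:upper:tail} only by a deterministic additive slack that is asymptotically negligible, so that the entire tail and risk analysis behind Theorem \ref{thm:upper} transfers verbatim. Concretely, I would reduce the pMLE to the MLE analysis by tracking how much worse the penalized objective allows the log-likelihood of $\hat{\bm x}$ to be relative to that of $\bm x^*$.

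First I would quantify the penalty slack. Writing $Z(\bm x) = l(\bm x; Y_\cS) - l(\bm x^*; Y_\cS)$, the defining inequality $l_\rho(\hat{\bm x}; Y_\cS) \geq l_\rho(\bm x^*; Y_\cS)$ together with the nonnegativity of $\rho_1\|\hat{\bm x}\|_1 + \rho_2\|\hat{\bm x}\|^2$ gives
$$
Z(\hat{\bm x}) \geq -R, \qquad R := \rho_1\|\bm x^*\|_1 + \rho_2\|\bm x^*\|^2 .
$$
Under Assumption \ref{assumption:star} (with $\cE,\cR$ centered at $\bm 0$) every coordinate of $\bm x^*$ is bounded by $U$ in absolute value, so $\|\bm x^*\|_1 \leq mU$ and $\|\bm x^*\|^2 = \sum_i \|\ttt_i^*\|^2 + \sum_k \|\w_k^*\|^2 \leq (N+K)U^2$. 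Hence $R = O((\rho_1+\rho_2)N)$ when $U, K, d_E, d_R$ are fixed, and since $n = \gamma N^2 K = \Theta(N^2)$ the normalized slack obeys $R/n = O((\rho_1+\rho_2)/N)$.

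Next I would re-run the proof of Lemma \ref{lemma:upper:tail} unchanged except that the maximizer property $Z(\hat{\bm x}) \geq 0$ is replaced by $Z(\hat{\bm x}) \geq -R$. Since $E[Z(\bm x)] = -n L(M(\bm x), M^*) \leq -nt$ on $\{L \geq t\}$, the event $\{L(\hat M, M^*)\geq t\}$ now forces the centered random field $Z(\bm x) - E[Z(\bm x)]$ to exceed $nt - R$ somewhere on $\{L \geq t\}$, rather than $nt$. The covering argument and Bennett bound are otherwise identical, so for every $\beta>0$ and $0 < s < nt - R$,
$$
P\!\left(L(\hat M, M^*) \geq t\right) \leq \exp\!\left\{-\tfrac{(nt-R)-s}{C}\,h\!\left(\tfrac{1}{2}-\tfrac{s}{2(nt-R)}\right)\right\}\!\left(1+\tfrac{2\sqrt{3}\alpha U n(1+\beta)}{s}\right)^{m} + \exp\{-n\beta h(\beta)\},
$$
which is exactly the bound of Lemma \ref{lemma:upper:tail} with $t$ replaced by $t - R/n$; the risk bound of Lemma \ref{lemma:upper:risk} follows with the same shift.

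Finally I would feed this shifted bound into the asymptotic computations. The hypothesis $\rho_1 + \rho_2 = o(\log N)$ yields $R/n = o(\log N / N)$. In the fixed-$t$ regime, $t - R/n \to t$ and the additive correction to the exponent is $R/(5C) = o(n)$, so \eqref{eq:thm:upper:constant} is unchanged; in the decaying regime $t = 10C\frac{m}{n}\log\frac{n}{m} = \Theta(\log N/N)$ we have $R/n = o(t)$, so $nt - R \sim nt$ and \eqref{eq:thm:upper:decay} follows. For the risk, the range $t \leq R/n$ contributes at most $R/n = o\!\left(\frac{m}{n}\log\frac{n}{m}\right)$ to $E[L] = \int_0^\infty P(L \geq t)\,dt$, while the complementary range reproduces \eqref{eq:thm:upper:risk}. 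The hard part is not any new probabilistic estimate but precisely the verification that $R/n$ is of strictly smaller order than the decaying threshold $\Theta(\log N/N)$; this is the single place where the condition on $\rho_1+\rho_2$ is consumed, and once it is in hand the remaining steps are identical to the unpenalized case.
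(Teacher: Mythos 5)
Your proposal is correct and is essentially the paper's own argument: the paper likewise reruns the covering/Bennett machinery of Lemma~\ref{lemma:upper:tail} with a penalty-induced additive slack (denoted $s_\rho$ there) and consumes the hypothesis $\rho_1+\rho_2=o(\log N)$ exactly once, to check that this slack is $o(nt)$ in both regimes; the only structural difference is that the paper carries the penalized field $f_\rho$ through the covering net, so its slack also contains $\rho$-dependent net-approximation terms, whereas you strip the penalty once and for all at $\hat{\bm x}$ via the deterministic bound $Z(\hat{\bm x})\geq -R$, which is marginally cleaner bookkeeping. One small overstatement: your displayed inequality is not literally ``Lemma~\ref{lemma:upper:tail} with $t$ replaced by $t-R/n$,'' because Bennett's inequality applied with $E[f(\bm x)]\leq -nt$ and $\var[f(\bm x)]\leq -2CE[f(\bm x)]$ on $\Theta_t$ yields the argument $h\left(\frac{1}{2}-\frac{R+s}{2nt}\right)$ rather than $h\left(\frac{1}{2}-\frac{s}{2(nt-R)}\right)$ (the variance bound is tied to the unshifted mean, so the shift does not propagate into $h$ exactly); this is harmless since $R+s=o(nt)$ in every regime you consider, so both arguments tend to $\frac{1}{2}$ and all three asymptotic conclusions follow as you state.
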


\subsubsection{Other loss functions}
We present some  results for the mean squared error loss $MSE_\phi$ defined in \eqref{eq:mse_loss} and the link prediction error $\widehat{err}$ defined in \eqref{eq:estimation_error_loss}. Corollaries \ref{cor:mse:upper} and \ref{cor:mse:lower} give upper and lower bounds for $MSE_\phi$, and Corollary $\ref{cor:err:upper}$ gives an upper bound for $\widehat{err}$ under an additional assumption.
\begin{corollary}\label{cor:mse:upper}
	Under the setting of Theorem \ref{thm:extension:pen} with the loss function replaced by $MSE_\phi$, we have the following asymptotic results.\\
	If $t$ is a fixed constant,
	\begin{equation}\label{eq:cor:mse:upper:constant}
	\log P\left(MSE_\phi \geq t\right) \lesssim - \frac{ 5 \sigma(C)\left(1-\sigma(C)\right) t}{2C}n.
	\end{equation}
	If $t = \frac{20C}{\sigma(C)\left(1-\sigma(C)\right)} \frac{m}{n} \log \frac{n}{m}$,
	\begin{equation}\label{eq:cor:mse:upper:decay}
	\log P\left(MSE_\phi\geq t\right) \lesssim - m \log \frac{n}{m}.
	\end{equation}
	Furthermore,
	\begin{equation}\label{eq:cor:mse:upper:risk}
	E\left[MSE_\phi\right] \lesssim \frac{20C}{\sigma(C)\left(1-\sigma(C)\right)} \frac{m}{n} \log \frac{n}{m}.
	\end{equation}
\end{corollary}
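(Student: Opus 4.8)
The plan is to reduce Corollary \ref{cor:mse:upper} to the bounds already established for the average KL loss $L$ in Theorem \ref{thm:extension:pen}, by proving a single deterministic, sample-path comparison between $MSE_\phi$ and $L$. Once that comparison is in hand, all three conclusions follow by substitution, and no new probabilistic argument is needed.

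First I would establish the pointwise inequality
\[
\frac{\sigma(C)\left(1-\sigma(C)\right)}{2}\, MSE_\phi \;\le\; L(\hat M, M^*),
\]
valid for every realization of $Y_\cS$. Fix an edge $\lambda$ and write $\theta^* = \phi(\bm x^*_\lambda)$ and $\hat\theta = \phi(\hat{\bm x}_\lambda)$, so that $M^*_\lambda = \sigma(\theta^*)$ and $\hat M_\lambda = \sigma(\hat\theta)$. Then $D(M^*_\lambda \,\|\, \hat M_\lambda)$ is the KL divergence between two Bernoulli laws whose natural (log-odds) parameters are exactly $\theta^*$ and $\hat\theta$. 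Viewing the Bernoulli family as an exponential family with log-partition $A(\theta) = \log(1+e^\theta)$, this divergence equals the associated Bregman divergence $A(\hat\theta) - A(\theta^*) - A'(\theta^*)(\hat\theta - \theta^*)$, and a second-order Taylor expansion gives $D(M^*_\lambda \,\|\, \hat M_\lambda) = \tfrac12 A''(\xi_\lambda)(\hat\theta - \theta^*)^2$ for some $\xi_\lambda$ between $\theta^*$ and $\hat\theta$, where $A''(\xi) = \sigma(\xi)\left(1-\sigma(\xi)\right)$.

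The one quantitative step is to lower-bound $A''(\xi_\lambda)$ uniformly. By the definition $C = \sup_{\bm u}|\phi(\bm u)|$ and $\bm x^*, \hat{\bm x} \in \Theta$, both $|\theta^*|$ and $|\hat\theta|$ are at most $C$, hence $|\xi_\lambda| \le C$; since $\sigma(x)\left(1-\sigma(x)\right)$ is symmetric about $0$ and decreasing in $|x|$, it is minimized on $[-C, C]$ at the endpoints, so $A''(\xi_\lambda) \ge \sigma(C)\left(1-\sigma(C)\right)$. This yields $D(M^*_\lambda \,\|\, \hat M_\lambda) \ge \tfrac12 \sigma(C)\left(1-\sigma(C)\right)\left(\phi(\hat{\bm x}_\lambda) - \phi(\bm x^*_\lambda)\right)^2$, and averaging over $\lambda \in \Lambda$ gives the displayed comparison. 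Confirming the monotonicity of $\sigma(1-\sigma)$ in $|x|$ together with the uniform score bound is the only real content; I expect it to be the main, though modest, obstacle, since everything else is bookkeeping.

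Finally I would transfer the three conclusions. The comparison rearranges to $MSE_\phi \le \frac{2}{\sigma(C)(1-\sigma(C))}\, L(\hat M, M^*)$, so $\{MSE_\phi \ge t\} \subseteq \{L \ge t'\}$ with $t' = \tfrac12 \sigma(C)\left(1-\sigma(C)\right) t$, and $E[MSE_\phi] \le \frac{2}{\sigma(C)(1-\sigma(C))}\, E[L]$. Substituting the fixed threshold $t'$ into \eqref{eq:thm:upper:constant} of Theorem \ref{thm:extension:pen} produces the fixed-$t$ tail bound \eqref{eq:cor:mse:upper:constant}; choosing $t = \frac{20C}{\sigma(C)(1-\sigma(C))}\frac{m}{n}\log\frac{n}{m}$ makes $t' = 10C\frac{m}{n}\log\frac{n}{m}$, which is exactly the threshold in \eqref{eq:thm:upper:decay}, giving \eqref{eq:cor:mse:upper:decay}; and bounding $E[MSE_\phi]$ by $\frac{2}{\sigma(C)(1-\sigma(C))}$ times the risk bound \eqref{eq:thm:upper:risk} yields \eqref{eq:cor:mse:upper:risk}.
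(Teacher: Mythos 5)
Your proposal is correct and follows essentially the same route as the paper: the paper's proof likewise establishes the pointwise comparison $\tfrac{1}{2}\sigma(C)\left(1-\sigma(C)\right) MSE_\phi \le L(\hat M, M^*)$ via a second-order Taylor expansion of $D\left(\sigma(x)\,\|\,\sigma(y)\right)$ in $y$ (your Bregman/log-partition derivation is the identical computation in exponential-family notation, including the endpoint minimization of $\sigma(\xi)(1-\sigma(\xi))$ on $[-C,C]$), and then transfers all three bounds from Theorem \ref{thm:extension:pen} exactly as you do. One caveat, which applies equally to the paper's own proof and to yours: substituting $t' = \tfrac{1}{2}\sigma(C)\left(1-\sigma(C)\right)t$ into \eqref{eq:thm:upper:constant} yields the constant $\sigma(C)\left(1-\sigma(C)\right)/(10C)$ in the fixed-$t$ tail bound, not the $5\sigma(C)\left(1-\sigma(C)\right)/(2C)$ displayed in \eqref{eq:cor:mse:upper:constant}, so the constant stated there appears to be a typo in the corollary rather than a gap in your argument.
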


\begin{corollary}\label{cor:mse:lower}
	Under the setting of Theorem \ref{thm:lower} with the loss function replaced by $MSE_\phi$, we have 
	\begin{equation}\label{eq:cor:mse:minimax}
	P\left( MSE_\phi >  \tilde{C} \frac{m/16 - 1}{8n}\right) \geq \frac{1}{2},
	\end{equation}
	and
	\begin{equation}
	\min\limits_{\hat M} \max\limits_{M^*} E\left[MSE_\phi\right]  \geq  \tilde{C} \frac{m/16 - 1}{16n}.
	\end{equation}
\end{corollary}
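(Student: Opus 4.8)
The plan is to reduce the lower bound for $MSE_\phi$ to the one already established for the average KL divergence $L$ in Theorem \ref{thm:lower}, by means of a pointwise inequality that controls $L$ from above by $MSE_\phi$.

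First I would record the exact second-order form of the Bernoulli KL divergence in the logit parametrization. Writing $A(t) = \log(1+e^t)$ for the logistic log-partition function, so that $\sigma = A'$ and $A''(t) = \sigma(t)(1-\sigma(t))$, the identity $D(\sigma(a)\|\sigma(b)) = A(b) - A(a) - A'(a)(b-a)$ exhibits the KL divergence as the Bregman divergence of $A$. A Taylor expansion with Lagrange remainder then gives $D(\sigma(a)\|\sigma(b)) = \frac12 \sigma(\xi)(1-\sigma(\xi))(a-b)^2$ for some $\xi$ between $a$ and $b$; since $\sigma(\xi)(1-\sigma(\xi)) \le \frac14$ for every real $\xi$, this yields $D(\sigma(a)\|\sigma(b)) \le \frac18(a-b)^2$, valid without any boundedness restriction on $a,b$.

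Applying this with $a = \phi(\bm x^*_\lambda)$ and $b = \phi(\hat{\bm x}_\lambda)$ and averaging over $\lambda \in \Lambda$ produces the key deterministic comparison $L(\hat M, M^*) \le \frac18 MSE_\phi$, equivalently $MSE_\phi \ge 8\, L(\hat M, M^*)$, which holds for every realization of $Y_\cS$ and every $\hat{\bm x}$. Consequently the event $\{L(\hat M, M^*) > \tilde C (m/16-1)/n\}$ is contained in $\{MSE_\phi > 8\tilde C(m/16-1)/n\}$, and a fortiori in $\{MSE_\phi > \tilde C(m/16-1)/(8n)\}$. Theorem \ref{thm:lower}, applied to the estimator $\hat M = M(\hat{\bm x})$, guarantees the existence of $\bm x^* \in \Theta$ for which the former event has probability at least $\frac12$; the same $\bm x^*$ therefore witnesses $P(MSE_\phi > \tilde C(m/16-1)/(8n)) \ge \frac12$, which is \eqref{eq:cor:mse:minimax}.

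The minimax risk bound then follows from the elementary inequality $E[X] \ge \tau\, P(X > \tau)$ for a nonnegative random variable $X$: taking $X = MSE_\phi$ and $\tau = \tilde C(m/16-1)/(8n)$ gives $E[MSE_\phi] \ge \frac12 \tau = \tilde C(m/16-1)/(16n)$ for the worst-case $\bm x^*$, and taking the minimum over $\hat M$ and maximum over $M^*$ preserves this bound. I do not anticipate a substantive obstacle here; the only points requiring care are getting the comparison inequality in the correct direction (the lower bound on $MSE_\phi$ needs the \emph{upper} bound of the KL divergence by the squared score difference, which is the opposite of what is used for the upper-bound Corollary \ref{cor:mse:upper}) and checking that this comparison is genuinely pointwise, so that the worst-case configuration $\bm x^*$ from Theorem \ref{thm:lower} transfers directly without re-running the packing construction.
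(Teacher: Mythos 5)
Your proposal is correct and follows essentially the same route as the paper: the paper also Taylor-expands $D\left(\sigma(x)\,\|\,\sigma(y)\right)$ in $y$ (with second derivative $\sigma(y)(1-\sigma(y)) \le \tfrac14$) to obtain the pointwise comparison $L(\hat M, M^*) \le \tfrac18\, MSE_\phi$, and then transfers the bad configuration $\bm x^*$ from Theorem \ref{thm:lower} directly, exactly as you do. Your observations that the comparison needs no boundedness of the scores and that it in fact yields the stronger threshold $8\tilde{C}(m/16-1)/n$ (the stated $\tilde{C}(m/16-1)/(8n)$ being a weaker, a fortiori consequence) are both accurate refinements of the paper's terser argument.
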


\begin{assumption}\label{assumption:distance}
	There exists $\varepsilon > 0$ such that $\left| M_\lambda^* - \frac{1}{2} \right| \geq \varepsilon$ for every $\lambda \in \Lambda$.
\end{assumption}

\begin{corollary}\label{cor:err:upper}
		Under the setting of Theorem \ref{thm:extension:pen} with the loss function replaced by $\widehat{err}$ and Assumption \ref{assumption:distance} added, we have the following asymptotic results.\\
	    If $t$ is a fixed constant,
	    \begin{equation}\label{eq:cor:err:upper:constant}
	    \log P\left(\widehat{err} \geq t\right) \lesssim - \frac{2\varepsilon^2t}{5C}n.
	    \end{equation}
	    If $t = \frac{5C}{\varepsilon^2} \frac{m}{n} \log \frac{n}{m}$,
	    \begin{equation}\label{eq:cor:err:upper:decay}
	    \log P\left(\widehat{err}\geq t\right) \lesssim - m \log \frac{n}{m}.
	    \end{equation}
	    Furthermore,
	    \begin{equation}\label{eq:cor:err:upper:risk}
	    E\left[\widehat{err}\right] \lesssim \frac{5C}{\varepsilon^2} \frac{m}{n} \log \frac{n}{m}.
	    \end{equation}
\end{corollary}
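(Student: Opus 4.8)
The plan is to reduce the link prediction error to the average KL divergence $L(\hat M, M^*)$ and then invoke Theorem \ref{thm:extension:pen}, which already controls $L$ for the pMLE under exactly this setting. The only genuinely new ingredient is a deterministic pointwise comparison showing that every misclassified edge contributes at least a fixed amount to $L$, and this is where Assumption \ref{assumption:distance} is used. Everything after that comparison is a change of variables in the thresholds of Theorem \ref{thm:extension:pen}.

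First I would fix an edge $\lambda$ at which a prediction error occurs, so that $\hat Y_\lambda \neq Y_\lambda^*$, i.e.\ $\sgn(\hat M_\lambda - \tfrac12) \neq \sgn(M_\lambda^* - \tfrac12)$ and hence $\hat M_\lambda$, $M_\lambda^*$ lie on opposite sides of $\tfrac12$. Since Assumption \ref{assumption:distance} forces $|M_\lambda^* - \tfrac12| \geq \varepsilon$, on the misclassification event $M_\lambda^*$ and $\hat M_\lambda$ are separated by at least $\varepsilon$; more precisely $|M_\lambda^* - \hat M_\lambda| \geq \varepsilon$. I would then apply Pinsker's inequality for Bernoulli distributions, $D(p\|q) \geq 2(p-q)^2$, to get $D(M_\lambda^* \| \hat M_\lambda) \geq 2\varepsilon^2$ at every misclassified $\lambda$. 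Summing over $\lambda \in \Lambda$ and retaining only the misclassified terms yields the deterministic bound $L(\hat M, M^*) \geq 2\varepsilon^2\, \widehat{err}$, equivalently $\widehat{err} \leq (2\varepsilon^2)^{-1} L(\hat M, M^*)$.

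The three claims then follow by substitution through the event inclusion $\{\widehat{err} \geq t\} \subseteq \{L(\hat M, M^*) \geq 2\varepsilon^2 t\}$. For fixed $t$, the rescaled threshold $2\varepsilon^2 t$ is again a fixed constant, so \eqref{eq:thm:upper:constant} gives $\log P(\widehat{err} \geq t) \lesssim -\tfrac{2\varepsilon^2 t}{5C} n$, which is \eqref{eq:cor:err:upper:constant}. For the decay regime I would set $2\varepsilon^2 t = 10C\,\tfrac{m}{n}\log\tfrac{n}{m}$, i.e.\ $t = \tfrac{5C}{\varepsilon^2}\tfrac{m}{n}\log\tfrac{n}{m}$, so that \eqref{eq:thm:upper:decay} yields \eqref{eq:cor:err:upper:decay}. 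Taking expectations in the deterministic inequality and applying \eqref{eq:thm:upper:risk} gives $E[\widehat{err}] \leq (2\varepsilon^2)^{-1} E[L] \lesssim \tfrac{5C}{\varepsilon^2}\tfrac{m}{n}\log\tfrac{n}{m}$, which is \eqref{eq:cor:err:upper:risk}.

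The main (and essentially only) obstacle is the pointwise step: one must check that a sign disagreement together with $|M_\lambda^* - \tfrac12| \geq \varepsilon$ genuinely forces $|M_\lambda^* - \hat M_\lambda| \geq \varepsilon$, and then convert this separation into a KL lower bound with the correct constant $2\varepsilon^2$. The orientation of the divergence (the true value in the first argument) is immaterial, since Pinsker's bound depends only on $(M_\lambda^* - \hat M_\lambda)^2$. Once the linear comparison $\widehat{err} \leq (2\varepsilon^2)^{-1} L$ is established, the rest is bookkeeping on the constants carried over from Theorem \ref{thm:extension:pen}.
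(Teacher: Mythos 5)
Your proposal is correct and follows essentially the same route as the paper: both reduce $\widehat{err}$ to the KL loss via the pointwise bound that each misclassified edge contributes at least $2\varepsilon^2$ to $L(\hat M, M^*)$ (the paper gets this by monotonicity, lower-bounding $D(M^*_\lambda \| \hat M_\lambda)$ by $D(\tfrac12+\varepsilon \| \tfrac12) \geq 2\varepsilon^2$, while you get it by Pinsker's inequality applied to the separation $|M^*_\lambda - \hat M_\lambda| \geq \varepsilon$), and then both transfer the three bounds of Theorem \ref{thm:extension:pen} through the inclusion $\{\widehat{err} \geq t\} \subseteq \{L \geq 2\varepsilon^2 t\}$ with identical constant bookkeeping.
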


\subsubsection{Sparse representations}

We are interested in sparse entity embeddings and relation parameters. Let  $\| \cdot \|_0$ be the number of non-zero elements of a vector and $\tau$ be a prespecified sparsity level of $\bm x$ (i.e. the proportion of nonzero elements). Let $m_\tau = m \tau$ be the upper bound of non-zero parameters, that is, $\| \bm x^* \|_0 \leq m_\tau$. Consider the following estimator
\begin{equation}\label{eq:sparse_mle}
\hat{\bm x} = \argmax_{\bm x \in \Theta} l\left(\bm x; \bm Y_\cS\right)  \quad  \text{subject to}  \quad \| \bm x\|_0 \leq m_\tau.
\end{equation}
The estimator defined above maximizes the $L_0$-penalized log-likelihood.

\begin{theorem}\label{thm:sparse}
	Consider  $\hat M$ defined in \eqref{eq:sparse_mle} and \eqref{Mhat} and the loss function $L$ in \eqref{eq:loss_function}.  Let the number of entities $N \rightarrow \infty$ and $\tau, C, K, U,  d_E, d_R, \alpha$ be absolute constants. Under Assumptions \ref{assumption:star} and \ref{assumption:lip}, the following asymptotic inequalities hold. \\
	If $t$ is a fixed constant,
	\begin{equation}\label{eq:thm:sparse:constant}
	\log P(L(\hat M, M^*) \geq t) \lesssim - \frac{t}{5C}n.
	\end{equation}
	If $t = 10C \frac{m_\tau}{n} \log \frac{n}{m_\tau}$,
	\begin{equation}\label{eq:thm:sparse:decay}
	\log P(L(\hat M, M^*) \geq t) \lesssim - m_\tau \log \frac{n}{m_\tau}.
	\end{equation}
	Furthermore,
	\begin{equation}\label{eq:thm:sparse:risk}
	E[L(\hat M, M^*)] \lesssim 10C \frac{m_\tau}{n} \log \frac{n}{m_\tau}.
	\end{equation}
\end{theorem}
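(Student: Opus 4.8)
The plan is to retrace the argument behind Lemma~\ref{lemma:upper:tail}, Lemma~\ref{lemma:upper:risk}, and Theorem~\ref{thm:upper}, changing only the one ingredient that depends on the ambient dimension, namely the covering number of the parameter space. Inspecting the right-hand side of \eqref{eq:lemma:upper:tail}, the dimension $m$ enters solely through the factor $\left(1+\frac{2\sqrt{3}\alpha U n(1+\beta)}{s}\right)^m$, which arises from covering the radius-$U$ ball $\Theta$ by a net whose resolution is dictated by the Lipschitz constant $\alpha$ (Assumption~\ref{assumption:lip}) and then applying a union bound over the net together with Bennett's inequality. The exponential decay term $\exp\{-\frac{nt-s}{C}h(\cdots)\}$ and the auxiliary term $\exp\{-n\beta h(\beta)\}$ control the fluctuation of $l(\bm x;Y_\cS)$ at each fixed net point and do not reference the dimension. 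For the estimator \eqref{eq:sparse_mle} the feasible set shrinks to $\Theta_\tau=\Theta\cap\{\bm x:\|\bm x\|_0\le m_\tau\}$, and since $\|\bm x^*\|_0\le m_\tau$ we still have $\bm x^*\in\Theta_\tau$ and $l(\hat{\bm x};Y_\cS)\ge l(\bm x^*;Y_\cS)$, so the comparison underlying the tail bound is unchanged.

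First I would replace the covering number of $\Theta$ by that of $\Theta_\tau$. The set $\Theta_\tau$ is a union, over the at most $\binom{m}{m_\tau}$ coordinate supports of size $m_\tau$, of radius-$U$ balls living in the corresponding $m_\tau$-dimensional coordinate subspaces. Covering each such ball at the same resolution used in Lemma~\ref{lemma:upper:tail} costs $\left(1+\frac{2\sqrt{3}\alpha U n(1+\beta)}{s}\right)^{m_\tau}$, and a union bound over supports multiplies this by $\binom{m}{m_\tau}$; non-convexity of $\Theta_\tau$ is immaterial, as the net argument only requires a cover. Repeating the derivation of Lemma~\ref{lemma:upper:tail} verbatim with this cover yields the sparse tail bound
\begin{equation*}
P\left(L(\hat M,M^*)\ge t\right)\le \exp\left\{-\frac{nt-s}{C}h\left(\tfrac{1}{2}-\tfrac{s}{2nt}\right)\right\}\binom{m}{m_\tau}\left(1+\frac{2\sqrt{3}\alpha U n(1+\beta)}{s}\right)^{m_\tau}+\exp\left\{-n\beta h(\beta)\right\}.
\end{equation*}

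Next I would absorb the extra combinatorial factor. Using $\binom{m}{m_\tau}\le (em/m_\tau)^{m_\tau}=(e/\tau)^{m_\tau}$, its logarithm is $m_\tau\log(e/\tau)=O(m_\tau)$ because $\tau$ is a fixed constant. Since $K,d_E,d_R,\gamma$ are fixed, $m_\tau=\tau(Nd_E+Kd_R)$ grows linearly in $N$ while $n=\gamma N^2K$ grows quadratically, so $n/m_\tau\to\infty$ and $\log\frac{n}{m_\tau}\to\infty$; hence $\log\binom{m}{m_\tau}=O(m_\tau)=o\!\left(m_\tau\log\frac{n}{m_\tau}\right)$. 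Carrying through the same choices of $s$ and $\beta$ as in the proofs of Lemma~\ref{lemma:upper:risk} and Theorem~\ref{thm:upper}, but with $m$ replaced throughout by $m_\tau$, the logarithm of the covering term becomes $\log\binom{m}{m_\tau}+m_\tau\log\left(1+\frac{2\sqrt{3}\alpha U n(1+\beta)}{s}\right)$, i.e. the original $m_\tau$-term plus a lower-order $O(m_\tau)$ contribution. The exponential decay term, proportional to $nt$, still dominates under $t=10C\frac{m_\tau}{n}\log\frac{n}{m_\tau}$, so the same constant $10C$ delivers \eqref{eq:thm:sparse:constant} and \eqref{eq:thm:sparse:decay} asymptotically; integrating the resulting tail bound exactly as in Lemma~\ref{lemma:upper:risk} gives \eqref{eq:thm:sparse:risk}.

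The main obstacle I anticipate is bookkeeping rather than conceptual: I must re-open the proof of Lemma~\ref{lemma:upper:tail} to confirm that $m$ truly enters \emph{only} through the cover and that the Lipschitz net argument remains valid on the non-convex set $\Theta_\tau$ (it does, since covering does not require convexity), and then verify that the additive $O(m_\tau)$ penalty from $\log\binom{m}{m_\tau}$ is genuinely swallowed by the leading $m_\tau\log\frac{n}{m_\tau}$ term with the \emph{same} numerical constants, so that no constant in \eqref{eq:thm:sparse:constant}--\eqref{eq:thm:sparse:risk} needs adjustment. This is precisely the place where the regime $n/m_\tau\to\infty$ is used.
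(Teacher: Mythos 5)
Your proposal is correct and follows essentially the same route as the paper's own proof: both cover the sparse set $\Theta_\tau$ by taking a union over the $\binom{m}{m_\tau}$ supports of $\epsilon$-nets for the $m_\tau$-dimensional balls, show that $\log\binom{m}{m_\tau}=O(m_\tau)$ is lower order than $m_\tau\log\frac{n}{m_\tau}$ (you via $\binom{m}{m_\tau}\le(e/\tau)^{m_\tau}$, the paper via Stirling), and then rerun the choices of $s$, $\beta$, and $t_0$ from Lemma~\ref{lemma:upper:risk} and Theorem~\ref{thm:upper} with $m$ replaced by $m_\tau$. Your explicit observation that $\bm x^*\in\Theta_\tau$ preserves the inequality $f(\hat{\bm x})\ge 0$ is a detail the paper leaves implicit, but otherwise the arguments coincide.
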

We omit the results for other loss functions as well as the lower bounds since they can be analogously obtained.

\section{Numerical Examples}\label{sec:numerical_example}

In this section, we demonstrate the finite sample performance of $\hat M$ through simulated and real data examples.
Throughout the numerical experiments, AdaGrad algorithm \citep{duchi2011adaptive} is used to compute $\hat {\bm x}$ in \eqref{eq:mle} or \eqref{eq:pen_mle}. It is a first-order optimization method that combines stochastic gradient descent (SGD) \citep{robbins1951sa} with adaptive step sizes for finding the local optima. Since the objective function in \eqref{eq:mle} is non-convex, a global maximizer is not guaranteed. Our objective function usually has many global maximizers, but, empirically, we found the algorithm works well on MRN recovery and the recovery performance is insensitive to the choice of the starting point of SGD. Computationally, SGD is also more appealing to handle large-scale MRNs than those more expensive global optimization methods.

\vspace{0.6cm}

\subsection{Simulated Examples}\label{sec:simulation}
In the simulated examples, we fix $K=20$, $d_E= 20$ and consider various choices of $N$ ranging from 100 to 10,000 to investigate the estimation performance as $N$ grows. 
The function $\phi$ we consider is a combination of the distance model \eqref{eq:phi_1} and the bilinear model \eqref{eq:phi_2},
\begin{equation}\label{eq:phi_3}
\phi\left(\ttt_i, \ttt_j, \bm w_k\right) = \left(\ttt_i + \bm a_k - \ttt_j\right)^T\diag\left(\bm b_k\right)\left(\ttt_i + \bm a_k - \ttt_j\right),
\end{equation} 
where $\ttt_i, \ttt_j, \bm a_k, \bm b_k \in \mathbb{R}^d$ and $\w_k = (\bm a_k, \bm b_k)$.
We independently generate the elements of $\bm \ttt_i^*$, $\bm a_k^*$, and $\bm b_k^*$ from normal distributions $N(0,1)$, $N(0,1)$, and $N(0, 0.25)$, respectively, where $N(\mu, \sigma^2)$ denotes the normal distribution with mean $\mu$ and variance $\sigma^2$. To guarantee that the parameters are from a compact set, the normal distributions are truncated to the interval [-20, 20]. Given the latent attributes, each $Y_{ijk}$ is generated from the Bernoulli distribution with success probability $M_{ijk}^* = \sigma(\phi(\bm \ttt_i^*, \bm \ttt_j^*, \bm \w_k^*))$. The observation probability $\gamma$ takes value from $\{0.005, 0.01, 0.02\}$. For each combination of $\gamma$ and $N$, $100$ independent datasets are generated.
For each dataset, we compute $\hat{\bm x}$ and $\hat M$ in \eqref{eq:mle} and \eqref{Mhat} with AdaGrad algorithm and then calculate $L(\hat M, M^*)$ defined in \eqref{eq:loss_function} as well as the link prediction error $\widehat{err}$ defined in \eqref{eq:estimation_error_loss}. The two types of losses are averaged over the 100 datasets for each combination of $N$ and $\gamma$ to approximate the theoretical risks $E[L(\hat M, M^*)]$ and $E[\widehat{err}]$. These quantities are plotted against $N$ in log scale in Figure \ref{fig:sim_nobs}. As the figure shows, in general, both risks decrease as $N$ increases. When $N$ is small, $n/m$ is not large enough to satisfy the condition $n/m \geq C_2 + e$ in Lemma \ref{lemma:upper:risk} and the expected KL risk increases at the beginning. After $N$ gets sufficiently large, the trend agrees with our asymptotic analysis.

\begin{figure}[htb]
	\centering
	\includegraphics[width=\textwidth]{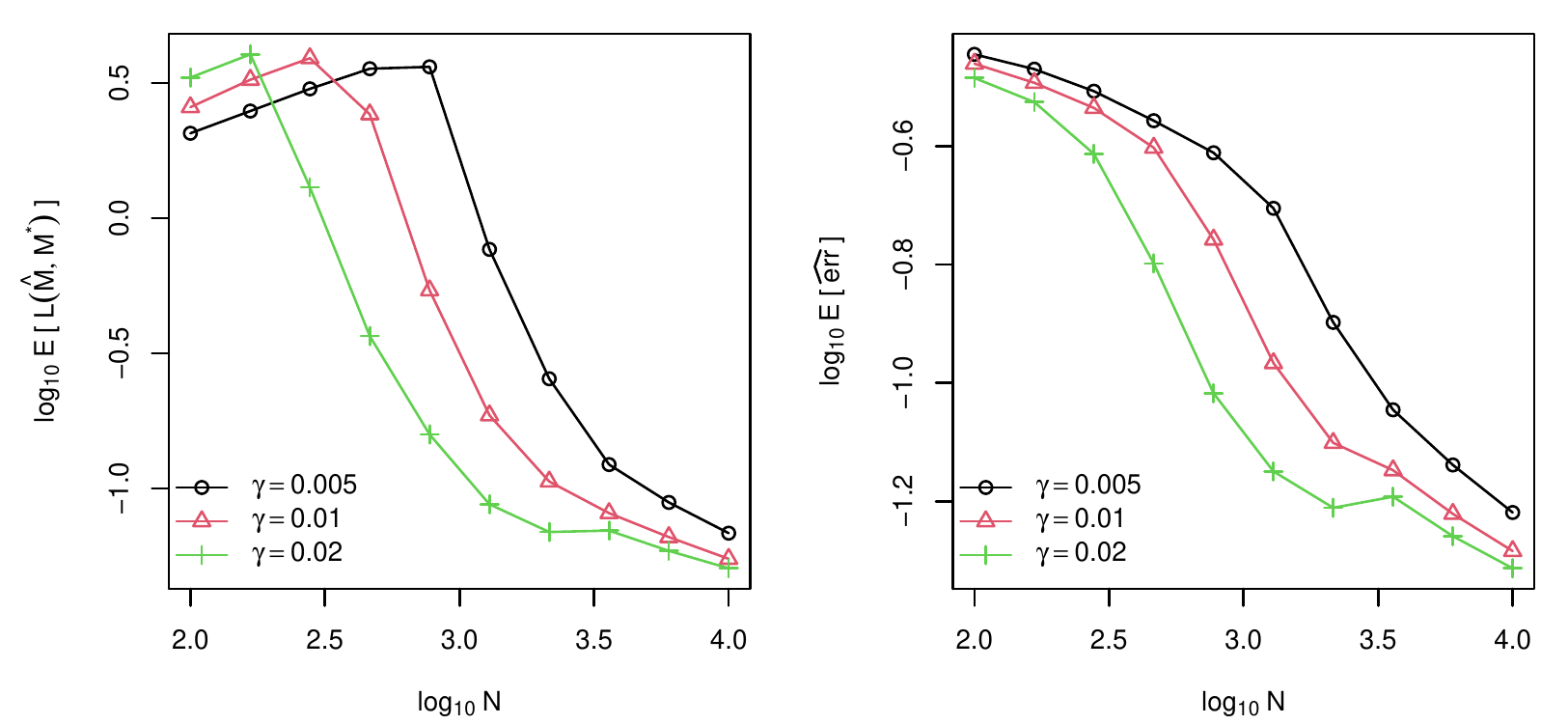}
	\caption{Average Kullback-Leibler divergence (left) and average link prediction error (right) of $\hat{M}$ for different choices of $N$ and $\gamma$.}\label{fig:sim_nobs}
\end{figure}

\subsection{Real data example: knowledge base completion}\label{sec:analysis}
WordNet \citep{miller1995wordnet} is a large lexical knowledge base for English. It has been used in word sense disambiguation, text classification, question answering, and many other tasks in natural language processing \citep{gabrilovich2009wikipedia,ferrucci2010building}. The basic components of WordNet are groups of words. Each group, called a synset, describes a distinct concept. 
In WordNet, synsets are linked by conceptual-semantic and lexical relations such as super-subordinate relation and antonym. We model WordNet as an MRN with the synsets as entities and the links between synsets as relations.

Following \citet{bordes2013translation}, we use a subset of WordNet for analysis. The dataset contains 40,943 synsets and 18 types of relations. A triple $(i,j,k)$ is called valid if relation $k$ from entity $i$ to entity $j$ exists, i.e., $Y_{ijk} = 1$. All the other triples are called invalid triples. Among more than $3.0 \times 10 ^{10}$ possible triples in WordNet, only 151,442 triples are valid. We assume that 141,442 valid triples and the same proportion of invalid triples are observed. The goal of our analysis is to recover the unobserved part of the knowledge base. We adopt the ranking procedure, which is commonly used in knowledge graph embedding literature, to evaluate link predictions. Given a valid triple $\lambda = (i,j,k)$, we rank estimated scores for all the invalid triples inside $\Lambda_{\cdot jk} = \left\{(i', j, k) \mid i' \in [N]\right\}$ in descending order and call the rank of $\phi\left(\hat{\bm x}_\lambda \right)$ as the head rank of $\lambda$, denoted by $H_{\lambda}$. Similarly, we can define the tail rank $T_{\lambda}$ and the relation rank $R_{\lambda}$ by ranking $\phi\left(\hat{\bm x}_{\lambda} \right)$ among the estimated scores of invalid triples in  $\Lambda_{ij\cdot}$ and $\Lambda_{i\cdot k}$, respectively. 
For a set $V$ of valid triples, the prediction performance can be evaluated by rank-based criteria, mean rank (MR), mean reciprocal rank (MRR), and hits at $q$ (Hits@q), which are defined as
$$\text{MR}_\text{E} = \frac{1}{2|V|}\sum_{\lambda \in V} H_{\lambda} + T_{\lambda}, ~~ \text{MR}_\text{R} = \frac{1}{|V|}\sum_{\lambda\in V} R_{\lambda},$$
$$\text{MRR}_\text{E} = \frac{1}{2|V|}\sum_{\lambda \in V} \frac{1}{H_{\lambda}} + \frac{1}{T_{\lambda}},~~\text{MRR}_\text{R} = \frac{1}{|V|}\sum_{\lambda \in V} \frac{1}{R_{\lambda}},$$ and 
$$\text{Hits}_\text{E}@q = \frac{1}{2|V|}\sum_{\lambda \in V} \bm 1_{\{H_\lambda\leq q\}} + \bm 1_{\{T_\lambda \leq q\}},~~\text{Hits}_\text{R}@q = \frac{1}{|V|}\sum_{\lambda \in V} \bm 1_{\{R_\lambda \leq q\}}.$$ The subscripts E and R represent the criteria for predicting entities and relations, respectively. Models with higher MRRs, Hits$@q$'s or lower MRs are more preferable. In addition, MRR is more robust to outliers than MR.

The three models described in \eqref{eq:phi_1}, \eqref{eq:phi_2}, and \eqref{eq:phi_3} are considered in our data analysis and we refer to them as Model 1, 2 and 3, respectively. For each model, the latent dimension $d$ takes value from $\{50, 100, 150, 200, 250\}$. Due to the high dimensionality of the parameter space, $L_2$ penalized MLE is used to obtain the estimated latent attributes $\hat{\bm x}$, with tuning parameters  $\rho_1 = 0$ and $\rho_2$ chosen from $\{0, 10^{-2}, 10^{-3}, 10^{-4}, 10^{-5}\}$ in \eqref{eq:pen_loglik}. Since information criteria based dimension and tuning parameter selection is computationally intensive for dataset of this scale, we set aside 5,000 of the unobserved valid triples as a validation set and select the $d$ and $\rho_2$ that produce the smallest $\text{MRR}_\text{E}$ on this validation set. The model with the selected $d$ and $\rho_2$ is then evaluated on the test set consisting of the rest 5,000 unobserved valid triples. 



The computed evaluation criteria on the test set are listed in Table \ref{table:WN_results}. The table also includes the selected $d$ and $\rho_2$ for each of the three score models. Models 2 and 3 generate similar performance. The MRRs for the two models are very close to 1, and the $\text{Hits}@q$'s are higher than 90\%, suggesting that the two models can identify the valid triples very well. Although Model 1 is inferior to the other two models in terms of most of the criteria, it outperforms them in $\text{MR}_\text{E}$. The results imply that Model 2 and Model 3 could perform extremely bad for a few triples. 

In addition to Models 1--3, we also display the performance of the Canonical Polyadic (CP) decomposition \cite{hitchcock1927cp} and a tensor factorization approach, RESCAL \cite{nickel2011rescal}. Their $\text{MRR}_\text{E}$ and $ \text{Hits}_\text{E}@10$ results on the WordNet dataset are extracted from \cite{trouillon2016complex}  and \cite{nickel2016holographic}, respectively. Both methods, especially CP, are outperformed by Model 3.

\begin{table}[htb]
	\centering
	\caption{Results for WordNet data analysis. The results for CP and RESCAL are extracted from \cite{trouillon2016complex}  and \cite{nickel2016holographic}.}\label{table:WN_results}
	\begin{tabular}{cccccccc}
		\hline
		$\text{Method}$ & $(d, \rho_2)$ & $\text{MR}_\text{E}$ & $\text{MRR}_\text{E}$ & $\text{Hits}_\text{E}@10$ & $\text{MR}_\text{R}$ & $\text{MRR}_\text{R}$ &  $\text{Hits}_\text{R}@1$ \\
		\hline
		Model 1 & $(100, 10^{-5})$ & 385 & 0.64 & 0.888 & 1.41 & 0.896 & 0.817 \\
		Model 2 & $(250, 10^{-4})$ & 769 & 0.94 & 0.945 & 1.31 & 0.968 & 0.959 \\
		Model 3 & $(200, 10^{-4})$ & 499 & 0.94 & 0.947 & 1.13 & 0.978 & 0.967 \\
		CP & -  & - & 0.075 & 0.125 & - & - & - \\
		RESCAL & - & - & 0.890 & 0.928 & - & - & - \\
		\hline
	\end{tabular}
\end{table}

\section{Concluding remarks}\label{sec:discussion}
In this article, we focused on the recovery of large-scale MRNs with a small portion of observations. We studied a generalized latent space model where entities and relations are associated with latent attribute vectors and conducted statistical analysis on the error of recovery. 
MLEs and pMLEs over a compact space are considered to estimate the latent attributes and the edge probabilities. We established non-asymptotic upper bounds for estimation error in terms of tail probability and risk, based on which we then studied the asymptotic properties when the size of MRN and latent dimension go to infinity simultaneously. A matching lower bound up to a log factor is also provided.

We kept $\phi$ generic for theoretical development. The choice of $\phi$ is usually problem-specific in practice. How to develop a data-driven method for selecting an appropriate $\phi$ is an interesting problem to investigate in future works.

Besides the latent space models, sparsity \citep{tran2020gm} or clustering assumptions \citep{jung2019tv} have been used to impose low-dimensional structures in single-relational networks. An MRN can be seen as a combination of several heterogeneous single-relational networks. The distribution of edges may vary dramatically across relations. Therefore, it is challenging to impose appropriate sparsity or cluster structures on MRNs. More empirical and theoretical studies are needed to quantify the impact of heterogeneous relations and to incorporate the information for recovering MRNs.

\appendix
\section*{Appendix}
\begin{proof}[Proof of Lemma \ref{lemma:upper:tail}]
	Let $\Theta_t = \left\{\bm x \in \Theta : L\left(M(\bm x), M^*\right)\geq t\right\}$ and $f(\bm x) = l\left(\bm x; Y_{\mathcal{S}}\right) - l \left(\bm x^*; Y_{\mathcal{S}}\right) $ be the log likelihood ratio. Therefore, $f$ is a random field living on $\Theta$. By writing $f(\bm x)$, we omit the second argument. In explicit form, $f(\bm x) = \sum\limits_{\lambda \in \Lambda} Z_\lambda$, where 
	\begin{equation}
	Z_\lambda = 1_{\lambda \in \cS} \left[Y_\lambda \log \frac{M_\lambda(\bm x)}{M_\lambda^*} + \left(1-Y_\lambda\right) \log \frac{1-M_\lambda(\bm x)}{1-M_\lambda^*} \right].
	\end{equation}
	We have $E\left[Z_\lambda\right] = - \gamma D\left(M_\lambda^* || M_\lambda(\bm x)\right)$ and $|Z_\lambda| \leq C$.  It follows that $f$ has properties
		(i) $f(\bm x^*) = 0$,
		(ii) $f(\hat{\bm x}) \geq 0$,
		(iii) $E\left[f(\bm x)\right] = - n L\left(M(\bm x), M^*\right)$.
	Based on the definition of $\Theta_t$ and property (ii), we have
	\begin{equation}\label{eq: to_supreme}
	P\left(L(\hat M, M^*) \geq  t\right) = P\left(\hat{\bm x} \in \Theta_t\right) \leq P\left(\sup_{\bm x \in \Theta_t} f(\bm x) \geq 0\right).
	\end{equation}
	From property (iii), we get that 
	\begin{equation}\label{eq:bound_mean}
    E\left[f(\bm x)\right] \leq -nt, \quad \forall \bm x \in \Theta_t.
	\end{equation}
    According to Lemma \ref{variance_bound_lemma} in Appendix, when $C \geq 2$, the variance of $Z_\lambda$ is bounded by
	\begin{equation*}
	\var\left[Z_\lambda\right] = \gamma M_\lambda^*(1-M_\lambda^*)\left(\log \frac{M_\lambda}{1-M_\lambda} - \log \frac{M_\lambda^*}{1-  M_\lambda^*}\right)^2 \leq 
	2 \gamma C D\left(M_\lambda^* || M_\lambda\right).
	\end{equation*}
	It follows that 
	\begin{equation}\label{eq:bound_var}
	\var\left[f(\bm x)\right] = \sum_{\lambda \in \Lambda} \var\left[Z_\lambda\right] \leq 2 \gamma C  \sum_{\lambda \in \Lambda} D\left(M_\lambda^* || M_\lambda\right) = -2 C E\left[f(\bm x)\right].
	\end{equation}
	By Bennett's inequality, 
    \begin{equation}
    P\left(f(\bm x) \geq -s\right) \leq \exp\left\{\frac{s + E\left[f(\bm x)\right]}{C} h\left(- \frac{C\left[s + E\left[f(\bm x)\right]\right]}{\var\left[f(\bm x)\right]}\right) \right\},
    \end{equation}
	where $ 0 < s < nt$ and $h(u) = \left(1+\frac{1}{u}\right) \log \left(1+u\right) - 1$ is an increasing function for $u > 0$.\\
	Hence by bounds in \eqref{eq:bound_mean}\eqref{eq:bound_var},
	\begin{equation}\label{eq:Bennett}
	P\left(f(\bm x) \geq -s\right) \leq \exp\left\{-\frac{nt-s}{C} h\left(\frac{s + E\left[f(\bm x)\right]}{2E\left[f(\bm x)\right]}\right)\right\} \leq \exp\left\{-\frac{nt-s}{C} h\left(\frac{1}{2} - \frac{s}{2nt}\right)\right\}.
	\end{equation}
	Let $\bm z = \argmax_{\bm x \in \Theta_t} f(\bm x)$ be the random vector  on $\Theta_t$ where $f(\bm x)$ reaches its maximum. Let $\mathcal{N}_{\epsilon, \mathcal{E}}$ and $\mathcal{N}_{\epsilon, \mathcal{R}}$ be the $\epsilon$-covering centers for $\mathcal{E}$ and  $\mathcal{R}$ respectively. Since $\mathcal{E}$ and $\mathcal{R}$ are balls of radius $U$, we can find $\epsilon$-coverings such that  $ | \mathcal{N}_{\epsilon, \mathcal{E}} | \leq \left(1+2U/\epsilon\right)^{d_E} $ and $ | \mathcal{N}_{\epsilon, \mathcal{R}} | \leq \left(1+2U/\epsilon\right)^{d_R}$. 
	For $\bm z = \left(\ttt_1, \ldots, \ttt_N, \w_1, \ldots, \w_K\right)$, there exists some $\bm x = \left(\ttt_1', \ldots, \ttt_N', \w_1', \ldots, \w_K'\right) \in  \mathcal{N}_{\epsilon, \mathcal{E}} ^N \times  \mathcal{N}_{\epsilon, \mathcal{R}} ^K$ such that $\| \ttt_i' - \ttt_i \| \leq \epsilon, \forall i \in [N]$ and $\| \w_k' - \w_k \| \leq \epsilon, \forall k \in [K].$ Therefore, 
	\begin{equation}\label{eq:covering_distance}
	f(\bm z) - f(\bm x) \leq  \sum_{\lambda \in \mathcal{S}}  | \phi(\bm z_\lambda) - \phi(\bm x_\lambda) | \leq \alpha \sum_{\lambda \in \mathcal{S}} \| \bm z_\lambda - \bm x_\lambda \| \leq \sqrt{3}  \alpha   |\cS| \epsilon.
	\end{equation}
	By Bennett's inequality, for every $\beta > 0$, 
	\begin{equation}\label{eq:Bennett_S}
	p\left(|\cS| - n > n\beta\right) \leq \exp\left\{-n\beta h\left(\frac{\beta}{1-\gamma}\right)\right\} \leq \exp\left\{-n\beta h(\beta)\right\}.
	\end{equation} 
	When $|\cS| \leq n(1+\beta)$, set $\epsilon = \frac{s}{\sqrt{3}\alpha n(1+\beta)} $, then $f(\bm z) - f(\bm x) \leq s$. 
	Combining \eqref{eq: to_supreme} \eqref{eq:Bennett}  and \eqref{eq:Bennett_S}, we get that
	\begin{equation}\label{eq:upper_bound_combine_steps}
	\begin{split}
	P\left(L(\hat M, M^*) \geq  t\right) & \leq P\left(\sup_{\bm x \in \Theta_t} f(\bm x) \geq 0, |\cS| \leq n(1+\beta) \right) + P\left(|\cS| > n(1+\beta)\right) \\
	&\leq P\left(\max\limits_{\bm x \in \mathcal{N}_{\epsilon, \mathcal{E}} ^N \times  \mathcal{N}_{\epsilon, \mathcal{R}} ^K } f(\bm x)\geq -s, |\cS| \leq n(1+\beta)\right) + P\left(|\cS| > n(1+\beta) \right)\\
	&\leq |\mathcal{N}_{\epsilon, \mathcal{E}} ^N \times  \mathcal{N}_{\epsilon, \mathcal{R}} ^K| \max\limits_{\bm x \in \mathcal{N}_{\epsilon, \mathcal{E}} ^N \times  \mathcal{N}_{\epsilon, \mathcal{R}} ^K} P\left(f(\bm x)\geq -s\right) + \exp\left\{-n\beta h(\beta)\right\}\\
	& \leq   \exp\left\{ -\frac{nt-s}{C} h\left(\frac{1}{2} - \frac{s}{2nt}\right) \right\} \left(1+\frac{2\sqrt{3}\alpha Un(1+\beta)}{s}\right)^m + \exp\left\{-n\beta h(\beta)\right\},
	\end{split}
	\end{equation}
	where $m = Nd_E + Kd_R$ is the degree of freedom. 
\end{proof}

\begin{proof}[Proof of Lemma \ref{lemma:upper:risk}]
	To bound $E\left[L(\hat M, M^*)\right]$, set $s = \frac{1}{2} nt$ and $\beta = 1+t$ in \eqref{eq:lemma:upper:tail} to get
	\begin{equation}
	P\left(L(\hat M, M^*) \geq t\right) \leq  \exp\left\{ -\frac{nt}{C_1}\right\} \left(1+ \frac{C_2}{2} + \frac{C_2}{t}\right)^m + \exp\left\{-\frac{1}{3}n(1+t)\right\}.
	\end{equation}
	By Fubini's Theorem,
	\begin{equation}\label{eq:mean_upper_bound_Fubini}
	E\left[L(\hat M, M^*)\right] = \int_{0}^{\infty}P\left(L(\hat M, M^*) \geq  t\right) dt \leq 
	t_0 + \int_{t_0}^{\infty}P\left(L(\hat M, M^*) \geq  t\right) dt.
	\end{equation}
	Let $C_3 = 2 \max\left[\{C_1, C_2\}\right]$ and $t_0 = C_3 \frac{m}{n} \log \frac{n}{m}$. When $t  \geq t_0$ and $\frac{n}{m} \geq C_2 + e $,
	\begin{equation}
	1 + \frac{C_2}{2} + \frac{C_2}{t} \leq 1 + \frac{C_2}{2} + \frac{C_2 n}{C_3m\log\frac{n}{m}} \leq  1 + \frac{C_2}{2} +\frac{n}{2m} \leq \frac{n}{m}.
	\end{equation}
	Thus
	\begin{equation} \label{eq:mean_upper_bound_integrand}
	P\left(L(\hat M, M^*) \geq  t\right) \leq \exp \left\{-\frac{nt}{C_1} + m \log \frac{n}{m} \right\} + \exp\left\{-\frac{1}{3}n(1+t)\right\}, \quad t \geq t_0.
	\end{equation}
	Hence by \eqref{eq:mean_upper_bound_Fubini} and \eqref{eq:mean_upper_bound_integrand}, 
	\begin{equation}
	\begin{split}
	E\left[L(\hat M, M^*)\right] & \leq t_0 + \frac{C_1}{n} \exp\left\{-\frac{nt_0}{C_1} + m \log \frac{n}{m} \right\} + \frac{3}{n}\exp\left\{-\frac{1}{3}n(1+t_0)\right\}\\
	& \leq C_3 \frac{m}{n} \log \frac{n}{m} + \frac{C_1}{n}\exp\left\{- m \log \frac{n}{m}\right\} + \frac{3}{n}\exp\left\{-\frac{1}{3}\left( n+ C_3 m \log \frac{n}{m}\right)\right\}. 
	\end{split}
	\end{equation}
\end{proof}

\begin{proof}[Proof of Theorem \ref{thm:upper}]
	When $t$ is a constant, let $s$ be absolute constant and $\beta = m \rightarrow \infty$ in Lemma \ref{lemma:upper:tail}. We analyze the order of three exponential terms on the right side of $\eqref{eq:lemma:upper:tail}$,
	\begin{gather*}
	-\frac{nt-s}{C} h\left(\frac{1}{2} - \frac{s}{2nt}\right) \sim  -\frac{h\left(\frac{1}{2}\right) }{C} nt, \\
	m \log\left(1+\frac{2\sqrt{3}\alpha Un(1+\beta)}{s}\right) \sim m \log(mn),\\
	-n\beta h(\beta) \sim -n m\log m.
	\end{gather*}
	Hence, both the second and the third term is asymptotically ignorable compared to the first term. It follows that 
	\begin{equation*}
	\log P\left(L(\hat M, M^*) \geq t\right) \lesssim - \frac{h\left(\frac{1}{2}\right)}{C}nt.
	\end{equation*}
	When $t = \frac{2C}{h\left(\frac{1}{2}\right)} \frac{m}{n} \log \frac{n}{m}$, let $s = m$ and $\beta$ be absolute constant. The exponential terms
	\begin{gather*}
	-\frac{nt-s}{C} h\left(\frac{1}{2} - \frac{s}{2nt}\right) \sim  - 2 m \log \frac{n}{m}, \\
	m \log\left(1+\frac{2\sqrt{3}\alpha Un(1+\beta)}{s}\right) =  m \log \frac{n}{m} + O(m).
	\end{gather*}
	The third term $\exp\left\{-n\beta h(\beta)\right\}$ is negligible. Therefore,
	\begin{equation}
	\log P\left(L(\hat M, M^*) \geq t\right) \lesssim -  m \log \frac{n}{m}.
	\end{equation}
	To bound the risk, we use similar approach as proof of Lemma $\ref{lemma:upper:risk}$. Let $s = m$, $\beta = 1 + t$ and $t_0 = \frac{2C}{h\left(\frac{1}{2}\right)} \frac{m}{n} \log \frac{n}{m}$.
	\begin{equation*}
	\begin{split}
	\int_{t_0}^{\infty} \exp\left\{	-\frac{nt-s}{C} h\left(\frac{1}{2} - \frac{s}{2nt}\right) \right\} dt & \leq \frac{C}{nh\left(\frac{1}{2} - \frac{s}{2nt_0}\right)} \exp\left\{	-\frac{nt_0-s}{C} h\left(\frac{1}{2} - \frac{s}{2nt_0}\right) \right\} \\
	& \sim \frac{C}{nh\left(\frac{1}{2}\right)} \exp\left\{ - 2m \log \frac{n}{m}\right\},
	\end{split}
	\end{equation*}
	\begin{equation*}
	m \log\left(1+\frac{2\sqrt{3}\alpha Un(1+\beta)}{s}\right) \leq  m \log\left(1+\frac{2\sqrt{3}\alpha Un(2 + t_0)}{m}\right) \sim m \log \frac{n}{m},
	\end{equation*}
	and 
	\begin{equation*}
	\int_{t_0}^{\infty} \exp\left\{-n (1+t) h\left(1+t\right)\right\} dt \leq \frac{3}{n} \exp\left\{-\frac{1}{3}n(1+t_0)\right\} = o\left(\exp\left\{ - m \log \frac{n}{m}\right\}\right).
	\end{equation*}
	It follows that
	\begin{equation}
	\begin{split}
	E\left[L(\hat M, M^*)\right] & \leq t_0 + \int_{t_0}^{\infty}P\left(L(\hat M, M^*) \geq  t\right) dt \\
	& \lesssim t_0 + o\left(t_0\right) \sim \frac{2C}{h\left(\frac{1}{2}\right)} \frac{m}{n} \log \frac{n}{m}.
	\end{split}
	\end{equation}	
	Since $h(\frac{1}{2}) \geq \frac{1}{5}$, we proof the results.
\end{proof}

\begin{lemma}\label{variance_bound_lemma}
	$\forall x,y \in [-C, C]$, we have
	\begin{equation}
	\sigma(x)\left(1-\sigma(x)\right) \left(y-x\right)^2 \leq 2 \max\left\{C,2\right\} D\left(\sigma(x)||\sigma(y)\right),
	\end{equation}
\end{lemma}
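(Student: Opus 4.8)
My plan is to pass to the natural (log‑odds) parametrization and recognize both sides as objects attached to the log‑partition function $A(t)=\log(1+e^{t})$, whose derivatives are $A'(t)=\sigma(t)$ and $A''(t)=\sigma(t)(1-\sigma(t))=:g(t)$. With $p=\sigma(x)$, $q=\sigma(y)$ the KL divergence is exactly the Bregman divergence of $A$, so Taylor's theorem with integral remainder gives
\begin{equation*}
D\big(\sigma(x)\,\|\,\sigma(y)\big)=A(y)-A(x)-A'(x)(y-x)=(y-x)^{2}\int_{0}^{1}(1-\theta)\,g\big(x+\theta(y-x)\big)\,d\theta .
\end{equation*}
Since the left side of the lemma equals $g(x)(y-x)^{2}$, after discarding the trivial case $x=y$ the claim is equivalent to a comparison between the endpoint value $g(x)$ and the $(1-\theta)$‑weighted average of $g$ along the segment from $x$ to $y$, namely
\begin{equation*}
\int_{0}^{1}(1-\theta)\,g\big(x+\theta(y-x)\big)\,d\theta \;\ge\; \frac{g(x)}{2\max\{C,2\}}.
\end{equation*}

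The key analytic input is that $g$ is log‑Lipschitz with constant one: $\frac{d}{dt}\log g(t)=1-2\sigma(t)=-\tanh(t/2)\in[-1,1]$, whence $g(s)\ge g(x)e^{-|s-x|}$ for every $s$. Substituting this and using $\int_{0}^{1}(1-\theta)e^{-\theta u}\,d\theta=(u-1+e^{-u})/u^{2}$ with $u=|y-x|$ reduces matters to the elementary one‑variable inequality $u^{2}\le (u+2)(u-1+e^{-u})$, which I would verify by checking that $R(u):=u-2+(u+2)e^{-u}$ satisfies $R(0)=0$, $R'(0)=0$ and $R''(u)=ue^{-u}\ge 0$. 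This already yields the clean bound
\begin{equation*}
\frac{g(x)(y-x)^{2}}{D\big(\sigma(x)\,\|\,\sigma(y)\big)}\le |y-x|+2 .
\end{equation*}

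It then remains to dominate $|y-x|+2$ by $2\max\{C,2\}$. First I would invoke the symmetry $(x,y)\mapsto(-x,-y)$, under which $g$, $(y-x)^{2}$ and $D$ are all invariant (because $g$ is even and $D(p\|q)=D(1-p\|1-q)$), to assume $x\ge 0$. If in addition $y\ge 0$, then $g$ is monotone on the segment and $|y-x|\le\max\{x,y\}\le C$, so $|y-x|+2\le C+2\le 2\max\{C,2\}$; the subcase $0\le y<x$ is even easier, since $g\ge g(x)$ on $[y,x]$ forces the ratio to be at most $2$. More generally the bound $|y-x|+2$ already closes every configuration with $|y-x|\le 2C-2$.

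The genuine obstacle is the sign‑crossing case $x\ge 0>y$ with $|y-x|=x+|y|$ close to its maximum $2C$, precisely where the exponential‑decay estimate is wasteful because the segment straddles the mode $t=0$ at which $g$ attains its maximum $\tfrac14$. Here I would abandon the decay bound and exploit unimodality directly, writing $D=\int_{y}^{x}(t-y)g(t)\,dt$ and splitting at $t=0$: on $[0,x]$ one has $g\ge g(x)$, while on $[y,0]$ the mode contributes an amount growing linearly in $|y|$, which the explicit antiderivatives of $g$ and $tg$ (using $\int_{0}^{\infty}(1-\sigma)=\log 2$) make precise, e.g. $\int_{y}^{0}(t-y)g(t)\,dt=\tfrac{|y|}{2}-\log\frac{2}{1+e^{y}}$. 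Balancing this linear‑in‑$|y|$ lower bound on $D$ against $g(x)$ — which is bounded by $\tfrac14$ when $x$ is small, and is exponentially small in $|x|$ exactly when $|y-x|$ nears $2C$ — keeps the ratio below $2\max\{C,2\}$. I expect this crossing regime, where a quadratically growing $(y-x)^{2}$ must be offset against a divergence that grows only linearly once the mode is passed, to be the main technical difficulty, and the reason the constant must scale with $\max\{C,2\}$ rather than being absolute.
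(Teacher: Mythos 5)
Your argument is correct, and genuinely different from the paper's, up to the point you yourself flag as the crux; but the crux is left unproven, and that is a real gap. What you do prove is nice: the exponential-family identity $D\left(\sigma(x)\|\sigma(y)\right)=(y-x)^2\int_0^1(1-\theta)\,\sigma'\left(x+\theta(y-x)\right)d\theta$, the log-Lipschitz bound $\sigma'(s)\geq\sigma'(x)e^{-|s-x|}$, and the reduction to $R(u)=u-2+(u+2)e^{-u}\geq 0$ are all valid, and they yield the clean intermediate bound $\sigma(x)\left(1-\sigma(x)\right)(y-x)^2\leq\left(|y-x|+2\right)D\left(\sigma(x)\|\sigma(y)\right)$, which settles every configuration except the sign-crossing one, $x\geq 0>y$ with $|y-x|>2\max\{C,2\}-2$. (The paper proceeds quite differently: for fixed $x\geq 0$ it differentiates $2\max\{C,2\}D\left(\sigma(x)\|\sigma(y)\right)-\sigma(x)\left(1-\sigma(x)\right)(y-x)^2$ in $y$, lower-bounds the secant slope of $\sigma$ by its value at $y=C$, and closes with a monotonicity argument in $x$; it never has to treat the mode-crossing regime separately.)

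In the crossing case you only assert that ``balancing'' the two contributions keeps the ratio below $2\max\{C,2\}$; you never verify it, and the verification is not routine. Writing $b=-y$ and $C_m=\max\{C,2\}$, your two lower bounds on $D$ require showing
\begin{equation*}
\sigma'(x)\,(x+b)^2\;\leq\;2C_m\left[\sigma'(x)\left(\tfrac{x^2}{2}+bx\right)+\tfrac{b}{2}-\log\tfrac{2}{1+e^{-b}}\right],
\end{equation*}
and neither right-hand term suffices by itself: for $C=2$, $x=1$, $y=-1.05$ the mode term is $4(0.132)\approx 0.53$ while the left side is $\sigma'(1)(2.05)^2\approx 0.83$; for $C=2$, $x=0.5$, $y=-2$ the $[0,x]$ term is $\approx 1.06$ while the left side is $\approx 1.47$. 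So the two terms must genuinely be combined, e.g.\ by splitting on whether $b\leq(1+\sqrt{2})x$ (there $C_m\geq 2$ makes the quadratic term alone sufficient, since $b^2\leq x^2+2bx$) or $b>(1+\sqrt{2})x$ (there the constraint $x+b>2C_m-2$ forces $b\geq\sqrt{2}(C_m-1)$, and one needs the elementary inequality $\tfrac{3b}{4}\geq 2\log\tfrac{2}{1+e^{-b}}$, which holds only for $b$ above roughly $1.05$, together with $C_m\geq\max\{2,b\}$). This can be pushed through, so your strategy is completable; but the step you leave as ``balancing \dots keeps the ratio below $2\max\{C,2\}$'' is exactly where the content of the lemma sits, and since single-term versions of the bound fail outright or survive with only a few percent of slack (e.g.\ near $b=2$, $x\approx 0.83$, $C=2$), it cannot be waved through.
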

\begin{proof}
	We only need to show the result for $x \geq 0$ by symmetry. For any fixed $x \in [0, C]$, define  $g(y) = 2 C_m D\left(\sigma(x)||\sigma(y)\right) - \sigma(x)\left(1-\sigma(x)\right) \left(y-x\right)^2 $, where $C_m = \max\left\{C,2\right\}$.  Since
	\begin{equation}
	g'(y) = 2C_m (\sigma(y) - \sigma(x))-2\sigma(x)(1-\sigma(x))(y-x),
	\end{equation}
	we have $g'(x) = g(x) = 0$. It remains to show that $\frac{g'(y)}{y-x}>0$ for all $y \in [-C,C]\setminus\left\{x\right\}$, then $g(x)$ reaches the minimum at $x=0$ and $g(y) \geq0$ on $[-C,C]$. Equivalently, we want to show that 
	$$C_m (\sigma(y) - \sigma(x)) / (y-x) > \sigma(x)(1-\sigma(x)).$$
	Note that $ (\sigma(y) - \sigma(x)) / (y-x) $ is the slope of secant line on logistic function and reaches its minimum at $y=C$.
	It suffices to show that 
	\begin{equation}
	(C-x)\sigma(x) (1-\sigma(x)) + C_m\sigma(x) \leq C_m\sigma(C), \forall x \in [0,C]
	\end{equation}
	Let $h(x)$ be left side above. By taking the derivative, we get
	$$h'(x) = \left[C_m-1 - (C-x)\left(2\sigma(x)-1\right)\right] \sigma(x)\left(1- \sigma(x)\right).$$
	If $1 \leq x \leq C$, then $(C-x)\left(2\sigma(x)-1\right)\leq C-1 \leq C_m-1$. If $ 0 \leq x \leq 1$, then $(C-x)\left(2\sigma(x) -1\right) \leq C\left(2\sigma(1)-1\right) \leq \frac{1}{2}C \leq C_m-1$. Therefore, $h'(x) \geq 0$ on $[0,C]$. It follows that $h(x) \leq h(C) = C_m \sigma(C)$.
\end{proof}

To prove the lower bound in Theorem \ref{thm:lower}, we will use Lemma \ref{lemma:GV_bound} -- \ref{lemma:KL_upper}. Since Lemma \ref{lemma:GV_bound} \cite{massart2007concentration} and Lemma \ref{lemma:Fano} \cite{cover2006info} are well established results in literature, we will skip the proofs.
\begin{lemma}[Gilbert-Varshamov bound]\label{lemma:GV_bound}
	There exists a subset $\mathcal{V}$ of the $d$-dimensional hypercube $\left\{-1,1\right\}^d $ of size at least $\exp\{d/8\}$ such that the Hamming distance
	\begin{equation}
	\sum_{i = 1 }^d 1_{\bm u_i \neq \bm v_i} \geq \frac{1}{4}d
	\end{equation}
	for all $\bm u \neq \bm v$ with $\bm u, \bm v \in \mathcal{V}$.
\end{lemma}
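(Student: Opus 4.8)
The plan is to prove the Gilbert--Varshamov bound (Lemma~\ref{lemma:GV_bound}) by the classical greedy, or maximal-code, construction, which turns the packing problem into a purely combinatorial volume estimate on Hamming balls. Equip $\{-1,1\}^d$ with the Hamming distance $d_H(\bm u,\bm v)=\sum_{i=1}^d 1_{\bm u_i\neq\bm v_i}$ and build $\mathcal V$ incrementally: starting from the full cube, repeatedly pick any point still in the pool, add it to $\mathcal V$, and then delete from the pool every point lying within Hamming distance strictly less than $d/4$ of the chosen point. By construction, any two points added at different stages are at distance at least $d/4$, so the resulting set automatically satisfies the required separation. The only quantity left to control is how many stages the procedure survives, i.e.\ the cardinality of $\mathcal V$.

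Each stage removes at most $V := \sum_{k=0}^{\lceil d/4\rceil-1}\binom{d}{k}$ points, namely those inside a single Hamming ball of radius just below $d/4$ (the center included). Since the cube has $2^d$ points and the procedure runs until the pool is exhausted, after $M$ stages at most $MV$ points have been deleted, so termination forces $MV\ge 2^d$. Hence $|\mathcal V|=M\ge 2^d/V$, and it suffices to show $2^d/V\ge \exp\{d/8\}$, a statement entirely independent of the network model.

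The remaining and essentially only substantive step is a clean upper bound on the ball volume $V$. Here I would invoke the standard binary-entropy inequality $\sum_{k=0}^{\lfloor\rho d\rfloor}\binom{d}{k}\le 2^{\,dH_2(\rho)}$, valid for $0<\rho\le 1/2$, applied with $\rho=1/4$ and $H_2(\rho)=-\rho\log_2\rho-(1-\rho)\log_2(1-\rho)$. A direct evaluation gives $H_2(1/4)=2-\tfrac34\log_2 3$, so that $2^d/V\ge 2^{\,d(1-H_2(1/4))}$. The argument then closes by verifying the numerical inequality $(1-H_2(1/4))\log 2\ge 1/8$ (natural logarithm): since $\log_2 3\le 1.585$ one has $1-H_2(1/4)\ge 0.188$, and multiplying by $\log 2\approx 0.693$ yields at least $0.130>0.125=1/8$. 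Therefore $2^d/V\ge e^{d/8}$ and the claim follows.

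I expect this final numerical margin to be the only delicate point: the bound holds precisely because $1-H_2(1/4)$ exceeds $1/(8\log 2)\approx 0.180$ by a small but strictly positive gap, so the entropy estimate must be carried with enough accuracy (for instance via a rigorous upper bound on $\log_2 3$) to preserve the inequality. As a sanity check on where the exponent $1/8$ comes from, note the probabilistic counterpart: for uniformly random $\bm u,\bm v$ the distance $d_H(\bm u,\bm v)$ is $\mathrm{Bin}(d,1/2)$, and Hoeffding's inequality gives $P\big(d_H<d/4\big)\le\exp\{-d/8\}$; a union/alteration argument recovers the same rate but only up to a constant factor in $|\mathcal V|$, whereas the greedy volume argument above secures the exact bound $\exp\{d/8\}$ stated in the lemma.
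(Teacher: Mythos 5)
Your argument is correct in substance, but be aware that the paper itself offers no proof to compare against: Lemma~\ref{lemma:GV_bound} is stated as a known result, cited to the literature (Massart, 2007), and the authors explicitly skip its proof. What you supply is therefore a genuinely self-contained replacement, and it is the standard one: the greedy maximal-packing construction (delete the open Hamming ball of radius $d/4$ around each chosen point, so survivors are pairwise at distance at least $d/4$), the counting bound $|\mathcal{V}|\ge 2^d/V$ with $V=\sum_{k=0}^{\lceil d/4\rceil-1}\binom{d}{k}$, and the entropy estimate $V\le\sum_{k=0}^{\lfloor d/4\rfloor}\binom{d}{k}\le 2^{dH_2(1/4)}$, which closes the bound because $\left(1-H_2(1/4)\right)\log 2\approx 0.131>1/8$. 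This buys the reader a complete proof with the exact constant $\exp\{d/8\}$, and your closing observation is also accurate: the probabilistic route via Hoeffding plus union/alteration recovers the same exponential rate but loses a constant factor in $|\mathcal{V}|$ (a pure union bound over pairs would even degrade the exponent to $d/16$). One concrete slip in your numerics: since $1-H_2(1/4)=\tfrac{3}{4}\log_2 3-1$, you need a \emph{lower} bound on $\log_2 3$ (e.g.\ $\log_2 3\ge 1.584$, giving $1-H_2(1/4)\ge 0.188$); your stated inequality $\log_2 3\le 1.585$ points in the wrong direction for the claim it is meant to justify, though the conclusion $1-H_2(1/4)\ge 0.188>1/(8\log 2)\approx 0.180$ is unaffected. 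With that direction corrected, the proof is complete and, if anything, more informative than the paper's citation, since the margin $0.131$ versus $0.125$ shows the constant $1/8$ in the lemma is close to what this construction can deliver at minimum distance $d/4$.
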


\begin{lemma}[Fano's inequality]\label{lemma:Fano}
	Let $V$ be a uniform random variable taking values in a finite set $\cV$ with cardinality $|\cV| \geq 2$. For any Markov chain $V \rightarrow X \rightarrow \hat V$, 
	\begin{equation}
	P\left(\hat V \neq V\right) \geq 1 - \frac{I(V;X) + \log 2}{\log\left(|\cV|\right)},
	\end{equation}
	where $I(V; X)$ is the mutual information between $V$ and $X$.
\end{lemma}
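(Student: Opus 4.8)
The plan is to prove this classical result by the standard entropy argument, reducing the desired probability bound to an inequality between entropies. Write $P_e = P(\hat V \neq V)$ and introduce the error indicator $E = 1_{\hat V \neq V}$, a Bernoulli random variable with $P(E=1) = P_e$. The goal is first to establish the core Fano inequality
\begin{equation}\label{eq:fano:core}
H(V \mid \hat V) \leq H(E) + P_e \log\left(|\cV| - 1\right),
\end{equation}
where $H(\cdot)$ denotes Shannon entropy, and then to lower bound $H(V \mid \hat V)$ using the Markov structure.

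To obtain \eqref{eq:fano:core}, I would expand the joint conditional entropy $H(E, V \mid \hat V)$ in two ways via the chain rule. Since $E$ is a deterministic function of the pair $(V, \hat V)$, one expansion gives $H(E, V \mid \hat V) = H(V \mid \hat V) + H(E \mid V, \hat V) = H(V \mid \hat V)$. The other expansion gives $H(E, V \mid \hat V) = H(E \mid \hat V) + H(V \mid E, \hat V)$. For the first term, conditioning reduces entropy, so $H(E \mid \hat V) \leq H(E)$. For the second, I would condition on the value of $E$: when $E = 0$ we have $V = \hat V$ so the entropy vanishes, and when $E = 1$ the variable $V$ is confined to $\cV \setminus \{\hat V\}$, a set of size $|\cV| - 1$, so its entropy is at most $\log(|\cV|-1)$; it follows that $H(V \mid E, \hat V) \leq P_e \log(|\cV|-1)$. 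Equating the two expansions and combining these bounds yields \eqref{eq:fano:core}.

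The final step connects $H(V \mid \hat V)$ to the mutual information $I(V; X)$ through the Markov chain. Because $V$ is uniform on $\cV$, we have $H(V) = \log|\cV|$, hence $H(V \mid \hat V) = H(V) - I(V; \hat V) = \log|\cV| - I(V; \hat V)$. The data processing inequality applied to $V \rightarrow X \rightarrow \hat V$ yields $I(V; \hat V) \leq I(V; X)$, so that $H(V \mid \hat V) \geq \log|\cV| - I(V; X)$. Substituting this lower bound into \eqref{eq:fano:core} and using the elementary estimates $H(E) \leq \log 2$ and $\log(|\cV|-1) \leq \log|\cV|$ gives $\log|\cV| - I(V;X) \leq \log 2 + P_e \log|\cV|$, which rearranges to the claimed bound on $P_e$. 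The only genuinely delicate point is the chain-rule decomposition behind \eqref{eq:fano:core}, in particular the uniform estimate $H(V \mid E=1, \hat V) \leq \log(|\cV|-1)$, which rests on the observation that an error confines $V$ to the complement of $\{\hat V\}$; the data processing inequality is a standard tool but should be invoked explicitly for the given three-variable Markov structure.
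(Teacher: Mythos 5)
Your proof is correct, but note that the paper itself does not prove Lemma~\ref{lemma:Fano}: it explicitly skips the proof and defers to \citet{cover2006info}, and your argument---the chain-rule expansion of $H(E, V \mid \hat V)$ combined with the data processing inequality for $V \rightarrow X \rightarrow \hat V$---is exactly the standard textbook proof found in that reference. The only implicit assumption worth flagging is that $\hat V$ takes values in $\cV$, which is needed for the step $H(V \mid E=1, \hat V) \leq \log(|\cV|-1)$; this holds in the paper's application (where the estimator is an $\argmin$ over $\cV$) and is in any case harmless here, since your final bound weakens $\log(|\cV|-1)$ to $\log|\cV|$, under which the estimate $H(V \mid E=1, \hat V) \leq \log|\cV|$ holds for an arbitrary $\hat V$.
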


\begin{lemma} \label{lemma:KL_upper}
	Suppose that $p,q \in (0,1)$. Then
	\begin{equation}
	D(p||q) \leq \frac{(p-q)^2}{q(1-q)}.
	\end{equation}
\end{lemma}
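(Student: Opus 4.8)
The plan is to bound the two logarithmic terms of $D(p||q) = p\log\frac{p}{q} + (1-p)\log\frac{1-p}{1-q}$ separately by means of the elementary inequality $\log x \leq x-1$, which holds for every $x>0$. First I would apply it to each ratio: $\log\frac{p}{q} \leq \frac{p}{q}-1 = \frac{p-q}{q}$ and $\log\frac{1-p}{1-q} \leq \frac{1-p}{1-q}-1 = \frac{q-p}{1-q}$. Both steps are valid because $p,q\in(0,1)$ guarantees that all the arguments are strictly positive.

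Next I would multiply these two bounds by the nonnegative weights $p$ and $1-p$, respectively, and add them, obtaining
\[
D(p||q) \leq \frac{p(p-q)}{q} + \frac{(1-p)(q-p)}{1-q}.
\]
The remaining work is purely algebraic: factoring $(p-q)$ out of both terms and placing the resulting bracket over the common denominator $q(1-q)$ collapses it to $\frac{p-q}{q(1-q)}$, so the right-hand side becomes $\frac{(p-q)^2}{q(1-q)}$, which is exactly the claimed bound.

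I do not expect a genuine obstacle, since the argument is short and self-contained. The only delicate point is the sign bookkeeping in the second term, where $\frac{1-p}{1-q}-1$ carries the sign of $q-p$ rather than $p-q$; tracking this correctly is what makes the two contributions combine into a perfect square instead of leaving a spurious linear term. A derivative-based alternative, fixing $q$ and showing $g(p)=\frac{(p-q)^2}{q(1-q)}-D(p||q)\geq 0$ via $g(q)=g'(q)=0$, is tempting, but $g$ is not globally convex in $p$ (its second derivative $\frac{2}{q(1-q)}-\frac{1}{p(1-p)}$ changes sign as $p$ approaches the endpoints), so that route would require additional case analysis. The term-by-term bound sidesteps this entirely, so I would favor it.
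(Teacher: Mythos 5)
Your proof is correct, and it takes a genuinely different route from the paper's. You bound each logarithm pointwise via $\log x \le x-1$, multiply by the nonnegative weights $p$ and $1-p$, and observe that the resulting weighted sum collapses \emph{exactly} to $\frac{(p-q)^2}{q(1-q)}$, since
\begin{equation*}
\frac{p(p-q)}{q} + \frac{(1-p)(q-p)}{1-q} = (p-q)\left(\frac{p}{q}-\frac{1-p}{1-q}\right) = (p-q)\cdot\frac{p-q}{q(1-q)}.
\end{equation*}
This is the classical ``KL is dominated by chi-squared divergence'' argument, and it handles all $p,q\in(0,1)$ in one stroke. The paper instead first uses the symmetry $D(1-p\,||\,1-q)=D(p\,||\,q)$ to reduce to $p\le q$, then views $D(p\,||\,q)$ as a function of $q$, applies the mean value theorem to write $D(p\,||\,q)=\frac{\xi-p}{\xi(1-\xi)}(q-p)$ for some $\xi\in[p,q]$, and finally invokes monotonicity of $\xi\mapsto\frac{\xi-p}{\xi(1-\xi)}$ to replace $\xi$ by the endpoint $q$. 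Your argument is shorter and more elementary: it needs no case split and no monotonicity verification (which in the paper's route amounts to checking that $\xi^2-2p\xi+p>0$, an omitted computation). What the paper's approach buys is a structural reading of the bound---it is a first-order expansion in the second argument with the slope evaluated at its worst endpoint---which makes clear that the constant $\frac{1}{q(1-q)}$ is the natural one and suggests how to sharpen or generalize the estimate; your approach, by contrast, makes the connection to the chi-squared divergence transparent. Your closing remark about the derivative-in-$p$ alternative failing global convexity is also accurate, but note it critiques a third route, not the one the paper actually takes (the paper differentiates in $q$, not $p$).
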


\begin{proof}
	Since $D(1-p || 1-q) = D(p || q)$, it suffices to show for case $p \leq q$. View $D(p||q)$ as a function of $q$. By mean value theorem, there exists $ \xi \in [p,q]$ such that 
	\begin{equation}
	D(p || q) - D(p || p) = \frac{\xi-p}{\xi(1-\xi)} (q-p)
	\end{equation} 
	Note that $\frac{\xi-p}{\xi(1-\xi)} $ is increasing in $\xi$ and $D(p||p)=0$. Hence, $D(p||q) \leq \frac{(q-p)^2}{q(1-q)}$.
\end{proof}

\begin{proof}[Proof of Theorem \ref{thm:lower}]
	Let $\bm u_0 = (\ttt_0, \ttt_0', \w_0)$, 
	$\tilde{\bm x} = (\underbrace{\ttt_0, \ldots, \ttt_0}_{\lfloor\frac{N}{2}\rfloor}, \underbrace{\ttt_0', \ldots, \ttt_0'}_{\lceil\frac{N}{2}\rceil}, \underbrace{\w_0, \ldots, \w_0)}_K$ and 
	$$\tilde{\Lambda} = \left\{(i,j,k)\in \Lambda \mid i \leq \lfloor\frac{N}{2}\rfloor, j > \lfloor\frac{N}{2}\rfloor\right\} \subset \Lambda$$ 
	with cardinality $| \tilde{\Lambda} | = \lfloor\frac{N}{2}\rfloor \lceil\frac{N}{2}\rceil K $. 
	If $\bm x \in \mathcal{N}_r(\tilde{\bm x})$, then $\bm x_\lambda \in \mathcal{N}_r(\bm u_0)$ for every $\lambda \in \tilde{\Lambda}$. Hence according to Assumption \ref{assumption:non_constant}, 
	 \begin{equation}\label{eq:apply_assumption_non_constant}
	 \left| \sigma\left(\phi(\bm x_\lambda)\right) - \sigma\left(\phi(\bm x_\lambda')\right) \right| \geq \kappa \|  \bm x_\lambda - \bm x_\lambda' \|, \quad \forall \bm x, \bm x' \in \mathcal{N}_r(\tilde{\bm x}), \lambda \in \tilde{\Lambda}.
	 \end{equation}
	 We will find $\bm x^*$ in the vicinity of  $\tilde{\bm x}$ such that  \eqref{eq: minimax} holds.
	 
	Let $\cH_E = \left\{-\delta/\sqrt{d_E}, \delta/\sqrt{d_E}\right\}^{Nd_E}$ and $\cH_R = \left\{-\delta/\sqrt{d_R}, \delta/\sqrt{d_R}\right\}^{Kd_R}$ be two hypercubes. According to Gilbert-Varshamov bound in Lemma \ref{lemma:GV_bound}, there exist $\cV_E \subset \cH_E$ and $\cV_R \subset \cH_R$ such that $|\cV_E| \geq \exp\left\{Nd_E/8\right\}$, $|\cV_R| \geq \exp\left\{Kd_R/8\right\}$ and 
	\begin{equation} \label{eq:apply_GV_entity}
	\sum_{i = 1 }^{Nd_E} 1_{\bm u_i \neq \bm v_i} \geq \frac{1}{4}Nd_E, \quad \forall \bm u, \bm v \in \cV_E, \bm u \neq \bm v,
	\end{equation}
	\begin{equation} \label{eq:apply_GV_relation}
	\sum_{i = 1 }^{Kd_R} 1_{\bm u_i \neq \bm v_i} \geq \frac{1}{4}Kd_R, \quad \forall \bm u, \bm v \in \cV_R, \bm u \neq \bm v.
	\end{equation}
	For $\bm u = (\ttt_1, \ldots, \ttt_N) \in \cV_E$, $\bm v = (\ttt_1', \ldots, \ttt_N') \in \cV_E$ and $\bm u \neq \bm v$,  \eqref{eq:apply_GV_entity} suggests that 
	\begin{equation}
	\sum_{i=1}^{N} \|\ttt_i - \ttt_i' \|^2  \geq \sum_{i=1}^{N} \left(2\delta/\sqrt{d_E}\right)^2 \frac{1}{4} N d_E = N \delta^2,
	\end{equation}
	Likewise, 
	from \eqref{eq:apply_GV_relation} we can get that 
	\begin{equation}
	\sum_{i=1}^{K} \|\w_k - \w_k' \| \geq K \delta^2,
	\end{equation}
	with $\bm u = (\w_1, \ldots, \w_K) \in \cV_R$, $\bm v = (\w_1', \ldots, \w_K') \in \cV_R$ and $\bm u \neq \bm v$.

    Let $\cV = \left\{\tilde{\bm x} + \bm e \mid \bm e \in \cV_E \times \cV_R\right\} = \{\bm x^{(1)}, \ldots, \bm x^{(T)}\} $ where $T = | \cV_E | | \cV_R| \geq \exp\left\{m/8\right\}$. By the definition of $\delta$-neighborhood and size of hypercubes, we have $\cV \subset \mathcal{N}_\delta\left(\tilde{\bm x}\right)$ and thus property in \eqref{eq:apply_assumption_non_constant} holds for $\delta \leq r$. The corresponding tensors are denoted as $M(\cV) = \left\{M^{(1)}, \ldots, M^{(T)}\right\}$ where $M^{(i)} = M\left(\bm x^{(i)}\right)$ for $i \in [T]$.
	Let $\bm z = \argmin\limits_{\bm x \in \cV} \| \hat M - M(\bm x) \|$, thus $M(\bm z)$ is the closet tensor to $\hat{M}$ in $M(\cV)$ under Frobenius norm.  By triangular inequality, 
	\begin{equation}
	\| \hat M -  M^{(i)}  \| \geq \frac{1}{2}\left( \| \hat M - M^{(i)} \|+ \| \hat M -  M(\bm z \|) \right)  \geq \frac{1}{2} \| M^{(i)} - M(\bm z) \|, \quad \forall i \in [T].
	\end{equation}
	Note that $\bm z, \bm x^{(i)} \in \cV$, according to Pinsker's inequality and \eqref{eq:apply_assumption_non_constant},
	\begin{equation*}
	L\left(\hat M, M^{(i)}\right) \geq \frac{2}{|\Lambda|} \| \hat M - M^{(i)} \|^2 \geq \frac{1}{2|\Lambda|} \| M^{(i)} - M(\bm z)\|^2 \geq \frac{\kappa^2}{2|\Lambda|} \sum_{\lambda \in \tilde \Lambda} \| \bm x^{(i)}_\lambda - \bm z_\lambda \|^2.
	\end{equation*}
	For all $\bm x \neq \bm x'$ with $\bm x, \bm x' \in \cV$ and $N \geq 2$,
	\begin{equation}\label{eq:packing_dist}
	\begin{split}
	\frac{1}{| \Lambda |} \sum_{\lambda \in \tilde \Lambda} \| \bm x_\lambda - \bm x_\lambda' \|^2 &\geq \frac{1}{|\Lambda|} \left( \lfloor \frac{N}{2} \rfloor K \sum_{i \in [N]}  \| \ttt_i - \ttt_i' \|^2 + \lfloor\frac{N}{2}\rfloor \lceil\frac{N}{2}\rceil \sum_{k \in [K]} \|\w_k - \w_k' \|^2 \right)\\
	&\geq \min\left\{\frac{1}{3}	\frac{1}{N}\sum_{i \in [N]} \| \ttt_i - \ttt_i'  \|^2, \frac{2}{9} \frac{1}{K}\sum_{k \in [K]} \|\w_k - \w_k' \|^2 \right\} = \frac{2}{9} \delta^2.
	\end{split}
	\end{equation}
	Hence when $\bm x^{(i)} \neq \bm z$,
	\begin{equation}
	  L\left(\hat M, M^{(i)}\right) \geq  \frac{1}{9} \kappa^2 \delta^2.
	\end{equation}
	Let $P_i$ denote the probability measure under $\bm x^{(i)}$. Results above show that
	\begin{equation}\label{eq:lower:bound_by_z}
	P_i\left(L(\hat M, M^{(i)})  \geq  \frac{1}{9} \kappa^2 \delta^2 \right)  \geq P_i\left(\bm x^{(i)} \neq \bm z\right), \quad \forall i \in [N].
	\end{equation}
	Assign a prior on $\bm x$ that is uniform on $\cV$ and denote by $P_\cV$ the Bayes average probability with respect to the prior. By Fano's inequality in Lemma \ref{lemma:Fano}, 
	\begin{equation}\label{eq: Fano}
	P_\cV\left(\bm z \neq  \bm x\right) \geq 1 - \frac{I(\bm x; Y_{\mathcal{S}}) + \log 2}{\log |T|},
	\end{equation}
	where $I(\bm x; X_{\mathcal{S}})$ is the mutual information between $\bm x$ and $Y_{\mathcal{S}}$. It can be bounded by the maximum pairwise KL divergence of $Y_\mathcal{S}$ under $P_i$ and $P_j$ as follows,
	\begin{equation}
	\begin{split}
	I(\bm x, Y_{\mathcal{S}}) = & \frac{1}{T} \sum_{i=1}^T D\left(P_i(Y_\cS) || P_\cV(Y_\cS)\right) \leq \max\limits_{i \neq j} D\left(P_i(Y_\cS) || P_j(Y_\cS)\right) = \\
	&\max\limits_{i \neq j} \sum_{\lambda \in \Lambda} D\left(P_i(Y_\lambda, \lambda \in \cS) || P_j(Y_\lambda, \lambda \in \cS)\right) = \max\limits_{i \neq j} n L\left(M^{(i)}, M^{(j)}\right).
	\end{split}
	\end{equation}
	Since $\sigma(\cdot)$ is logistic function, the derivative $\sigma'(x) = \sigma(x) \left(1-\sigma(x)\right) < 1$. By Assumption \ref{assumption:lip}, $\phi(\cdot)$ is Lipschitz continuous with coefficient $\alpha$ , we get that $\sigma(\phi(\cdot))$ is also Lipschitz continuous with coefficient $\alpha$. 
	Let  $b = \sup\limits_{\bm u \in \mathcal{N}_r(\bm u_0)} \sigma\left(\phi(\bm u)\right)$, by Lemma \ref{lemma:KL_upper} we get
	\begin{equation}
	L(M^{(i)}, M^{(j)}) \leq \frac{\| M^{(i)} - M^{(j)} \|^2}{|\Lambda|b(1-b)} \leq \frac{\alpha^2 \sum_{\lambda \in \Lambda} \|  \bm x^{(i)}_\lambda - \bm x^{(j)}_\lambda \| ^2}{|\Lambda|b(1-b)} \leq \frac{ 3(2 \delta)^2 \alpha^2}{b(1-b)} = \frac{12\alpha^2\delta^2}{b(1-b)}
	\end{equation}
	for all $i, j \in [N]$. Hence, there exists $\bm x^{(i)} \in \cV$ such that 
	\begin{equation}
		P_i\left( \bm z \neq \bm x^{(i)}\right) \geq 1 - \frac{\frac{12\alpha^2\delta^2 n}{b(1-b)} + \log 2}{\log |T|} \geq 1 - \frac{\frac{12\alpha^2\delta^2 n}{b(1-b)} + 1}{m / 8}.
	\end{equation}
	Let $\bm x^* = \bm x^{(i)}$ , $P = P_i$ and 
	\begin{equation*}
	\delta^2 = \frac{(m/16-1) b(1-b)}{12\alpha^2n} \leq r^2.
	\end{equation*}
	It follows from \eqref{eq:lower:bound_by_z} that 
	\begin{equation}
	P\left( L(\hat M, M^{(i)})  \geq  \frac{\kappa^2 b(1-b)}{108\alpha^2} \frac{m/16-1}{n} \right) \geq \frac{1}{2}.
	\end{equation}
\end{proof}

\begin{proof}[Proof of Theorem \ref{thm:extension:pen}]
	We will show the result by continuing the proof of Lemma \ref{lemma:upper:tail} and Theorem \ref{thm:upper} with some modifications. Let $f_\rho(\bm x)$ be the penalized log likelihood ratio, we have 
	\begin{equation}
	\begin{split}
	f_\rho(\bm x) &= l_\rho\left(\bm x; Y_\cS\right) - l_\rho\left(\bm x^*; Y_\cS\right) \\
	&= f(\bm x) - \rho_1\left(\|\bm x\|_1 - \|\bm x^*\|_1\right)  - \rho_2\left(\|\bm x\|^2 - \|\bm x^*\|^2\right) \\
	&\leq f(\bm x) + \sqrt{2}\rho_1 (N+K) U + \rho_2 (N+K)U^2
	\end{split}
	\end{equation}
	According to \eqref{eq:covering_distance}, there exists $\bm x$ among the $\epsilon$-covering centers such that 
	\begin{equation}
	\begin{split}
	f_\rho(\bm z) - f_\rho(\bm x) &= f(\bm z) - f(\bm x) - \rho_1\left(\|\bm z\|_1 - \| \bm x\|_1\right) - \rho_2\left(\|\bm z\|^2 - \| \bm x\|^2\right) \\
	&\leq \sqrt{3} \alpha |\cS|\epsilon +  \sqrt{2} \rho_1 (N+K)  \epsilon + 2 \rho_2 (N+K) U \epsilon,
	\end{split}
	\end{equation}
	where $\bm z = \argmax_{\bm x \in \Theta_t} f_\rho(\bm x)$.
	It follow that when $|\cS| \leq n(1+\beta)$ and $f_\rho(\bm z) \geq 0$,
	\begin{equation}
	\begin{split}
	f_\rho(\bm x) & \geq  - \sqrt{3} \alpha |\cS|\epsilon - \sqrt{2} \rho_1 (N+K)  \epsilon - 2 \rho_2 (N+K) U \epsilon \\
	& \geq -s - \frac{(N+K) s}{\alpha n (1+\beta)} \left(\sqrt{\frac{2}{3}} \rho_1 + \frac{2}{\sqrt{3}} \rho_2 U\right),
	\end{split}
	\end{equation}
	with $\epsilon = \frac{s}{\sqrt{3}\alpha n(1+\beta)} $.
	Hence, we can rewrite \eqref{eq:upper_bound_combine_steps} as 
	\begin{equation}
	\begin{split}
	P\left(L(\hat M, M^*) \geq  t\right) & \leq P\left(\sup_{\bm x \in \Theta_t} f_\rho(\bm x) \geq 0, |\cS| \leq n(1+\beta) \right) + P\left(|\cS| > n(1+\beta)\right) \\
	&\leq |\mathcal{N}_{\epsilon, \mathcal{E}} ^N \times  \mathcal{N}_{\epsilon, \mathcal{R}} ^K| P\left(f(\bm x)\geq -s_\rho\right) + \exp\left\{-n\beta h(\beta)\right\}\\
	& \leq   \exp\left\{ -\frac{nt-s_\rho}{C} h\left(\frac{1}{2} - \frac{s_\rho}{2nt}\right) \right\} \left(1+\frac{2\sqrt{3}\alpha Un(1+\beta)}{s}\right)^m + \exp\left\{-n\beta h(\beta)\right\},
	\end{split}
	\end{equation}
	where $$s_\rho = s + \frac{(N+K) s}{\alpha n (1+\beta)} \left(\sqrt{\frac{2}{3}} \rho_1 + \frac{2}{\sqrt{3}} \rho_2 U\right)+  \sqrt{2}\rho_1 (N+K) U + \rho_2 (N+K)U^2.$$
	Therefore, $s_\rho = s + o(s) + O(N) = o(nt)$ when $t$ and $s$ are absolute constant or when $t = \frac{2C}{h\left(\frac{1}{2}\right)} \frac{m}{n} \log \frac{n}{m}$ and $s = m$. Hence the proof of Theorem \ref{thm:upper} applies and the asymptotic results hold.
\end{proof}

\begin{proof}[Proof of Corollary \ref{cor:mse:upper},  \ref{cor:mse:lower} and \ref{cor:err:upper}]
	To show these corollaries, we associate $MSE_\phi$ and $\widehat{err}$ with $L(\hat M, M^*)$.
	The first and second order derivatives of $ D\left(\sigma(x) || \sigma(y)\right)$ as a function of $y$ are
	\begin{equation}
	\frac{\partial}{\partial y} D\left(\sigma(x) || \sigma(y)\right) = \sigma(y) - \sigma(x), \quad \frac{\partial^2}{\partial^2 y}D\left(\sigma(x) || \sigma(y)\right)  = \sigma(y)\left(1- \sigma(y)\right).
	\end{equation}
	By Taylor expansion, there exists $\xi = u x + (1-u) y$ with $u \in (0,1)$ such that $D\left(\sigma(x) || \sigma(y)\right)  = \frac{1}{2} \sigma(\xi)\left(1- \sigma(\xi)\right) (y-x)^2$. Hence, for $x, y\in [-C, C]$,
	\begin{equation}
	\frac{1}{2} \sigma(C)\left(1-\sigma(C)\right) (y-x)^2\leq D\left(\sigma(x) || \sigma(y)\right) \leq \frac{1}{8} (y-x)^2  .
	\end{equation}
	It follows that 
	\begin{equation}\label{eq:cor:mse_sandwich}
	\frac{1}{2} \sigma(C)\left(1-\sigma(C)\right) MSE_\phi \leq L\left(\hat M, M^*\right) \leq \frac{1}{8} MSE_\phi.
	\end{equation}
	where $MSE_\phi = \frac{1}{|\Lambda|} \sum_{\lambda \in \Lambda} \left(\phi(\hat{\bm x}_\lambda) - \phi(\bm x_\lambda^*) \right)^2$ is the mean squared error of edge scores. The upper bound of $MSE_\phi$ follows from Theorem \ref{thm:extension:pen} and left half of \eqref{eq:cor:mse_sandwich}. By Theorem \ref{thm:lower} and right half of \eqref{eq:cor:mse_sandwich}, we get the corresponding lower bound. Likewise, for $\widehat{err}$ we can derive the upper bound by
	\begin{equation}
	L\left(\hat M, M^*\right) = \frac{1}{|\Lambda|} \sum_{\lambda \in \Lambda} D\left(M^*_\lambda || \hat M_\lambda\right) \geq 
	\frac{1}{|\Lambda|} \sum_{\lambda \in \Lambda} 1_{\hat Y_\lambda \neq Y_\lambda^*} D\left(\frac{1}{2} + \varepsilon || \frac{1}{2}\right) \geq 2 \epsilon^2 \widehat{err} .
	\end{equation}
\end{proof}

\begin{proof}[Proof of Theorem \ref{thm:sparse}]
	Let $\Theta_\tau = \left\{\bm x \in \cE^N \times \cR^K \mid \|\bm x\|_0 \leq m_\tau\right\}$ be subspaces of $\Theta$ with at most $m_\tau$ non-zeros and $\mathcal N_{\Theta_\tau}$ be its $\epsilon$-covering centers. There are ${m \choose m_\tau}$ combinations of support, and each subspace has a covering number of $\left(1 + \frac{2U}{\epsilon}\right)^{m_\tau}$. Hence, the overall $\epsilon$-covering number of $\Theta_\tau$ would be 
	\begin{equation}
	| \mathcal N_{\Theta_\tau} | = {m \choose m_\tau} \left(1 + \frac{2U}{\epsilon}\right)^{m_\tau}.
	\end{equation}
	We can rewrite Lemma \ref{lemma:upper:tail} as
	\begin{equation}
	P\left(L(\hat M, M^*) \geq  t\right) \leq  \exp\left\{ - \Rmnum{1} + \Rmnum{2} \right\}  + \exp\left\{-\Rmnum{3}\right\},
	\end{equation}
	where 
	\begin{gather*}
	\Rmnum{1} = \frac{nt-s}{C} h\left(\frac{1}{2} - \frac{s}{2nt} \right),\\
	\Rmnum{2} = \log {m \choose m_\tau} + m_\tau \log \left(1+\frac{2\sqrt{3}\alpha Un(1+\beta)}{s}\right),\\
	\Rmnum{3}= n\beta h(\beta).
	\end{gather*}
	By Stirling's approximation, 
	\begin{equation}
	\begin{split}
	\log {m \choose m_\tau}  & \sim - m_\tau  \log \tau - \left(m - m_\tau \right) \log(1-\tau) -\frac{1}{2} \log m \\
	& \lesssim  m_\tau  \left(-\log \tau+1\right) -\frac{1}{2}\log m = O(m_\tau).
	\end{split}
	\end{equation}
	To get the results, when $t$ is absolute constant, let $s$ be absolute constant and $\beta = m$. When $t = \frac{2C}{h\left(\frac{1}{2}\right)} \frac{m_\tau}{n} \log \frac{n}{m_\tau}$, let $s = m_\tau$ and $\beta$ be absolute constant. For risk upper bound, select $s = m_\tau, \beta = 1+t$ and $t_0 = \frac{2C}{h\left(\frac{1}{2}\right)} \frac{m_\tau}{n} \log \frac{n}{m_\tau}$. At last, use $h(\frac{1}{2}) \geq \frac{1}{5}$.
\end{proof}

\bibliographystyle{apalike}
\bibliography{mrg}

\end{document}